\newtheorem{theorem}{Theorem}[section]
\newtheorem{proposition}[theorem]{Proposition}
\newtheorem{claim}[theorem]{Claim}
\newtheorem*{claim*}{Claim}
\newtheorem{corollary}[theorem]{Corollary}
\newtheorem{Main Conjecture}[theorem]{Main Conjecture}
\newtheorem{conjecture}[theorem]{Conjecture}
\newtheorem*{principle*}{Atiyah-Bott Combinatorial Dream (A$\cdot$B$\cdot$C$\cdot$D)}
\theoremstyle{remark}
\newtheorem{example}[theorem]{Example}
\theoremstyle{plain}
\newtheorem*{Example*}{Example}
\newcommand*{\rom}[1]{\expandafter\@slowromancap\romannumeral #1@}
\newcommand{\cellsizeL}{19}
\newcommand{\cellsizeS}{14}
\newlength{\cellszL} \setlength{\cellszL}{\cellsizeL\unitlength}
\newsavebox{\cellL}
\sbox{\cellL}{\begin{picture}(\cellsizeL,\cellsizeL)
\put(0,0){\line(1,0){\cellsizeL}}
\put(0,0){\line(0,1){\cellsizeL}}
\put(\cellsizeL,0){\line(0,1){\cellsizeL}}
\put(0,\cellsizeL){\line(1,0){\cellsizeL}}
\end{picture}}
\newcommand\cellifyL[1]{\def\thearg{#1}\def\nothing{}%
\ifx\thearg\nothing
\vrule width0pt height\cellszL depth0pt\else
\hbox to 0pt{\usebox{\cellL} \hss}\fi%
\vbox to \cellszL{
\vss
\hbox to \cellszL{\hss$#1$\hss}
\vss}}
\newcommand\tableauL[1]{\vtop{\let\\\cr
\baselineskip -16000pt \lineskiplimit 16000pt \lineskip 0pt
\ialign{&\cellifyL{##}\cr#1\crcr}}}
\newlength{\cellszS} \setlength{\cellszS}{\cellsizeS\unitlength}
\newsavebox{\cellS}
\sbox{\cellS}{\begin{picture}(\cellsizeS,\cellsizeS)
\put(0,0){\line(1,0){\cellsizeS}}
\put(0,0){\line(0,1){\cellsizeS}}
\put(\cellsizeS,0){\line(0,1){\cellsizeS}}
\put(0,\cellsizeS){\line(1,0){\cellsizeS}}
\end{picture}}
\newcommand\cellifyS[1]{\def\thearg{#1}\def\nothing{}%
\ifx\thearg\nothing
\vrule width0pt height\cellszS depth0pt\else
\hbox to 0pt{\usebox{\cellS} \hss}\fi%
\vbox to \cellszS{
\vss
\hbox to \cellszS{\hss$#1$\hss}
\vss}}
\newcommand\tableauS[1]{\vtop{\let\\\cr
\baselineskip -16000pt \lineskiplimit 16000pt \lineskip 0pt
\ialign{&\cellifyS{##}\cr#1\crcr}}}
\newcommand{\gap}{\hspace{1in} \\ \vspace{-.2in}}
\newcommand{\excise}[1]{}
\title{The A$\cdot$B$\cdot$C$\cdot$D\MakeLowercase{s} of Schubert calculus}
\author{Colleen Robichaux}
\author{Harshit Yadav}
\author{Alexander Yong}
\address{Dept. of Mathematics, University of Illinois at Urbana-Champaign, Urbana, IL 61801}
\email{cer2@illinois.edu, yadav7@illinois.edu, ayong@illinois.edu}
\begin{document}
\pagestyle{plain}

\mbox{}

\begin{abstract}
We collect \emph{Atiyah-Bott Combinatorial Dreams} (A$\cdot$B$\cdot$C$\cdot$Ds) in Schubert calculus. One result
relates equivariant structure coefficients for two isotropic flag manifolds, with consequences to the thesis
 of C.~Monical.  We contextualize using work of
N.~Bergeron-F.~Sottile, S.~Billey-M.~Haiman, P.~Pragacz, and T.~Ikeda-L.~Mihalcea-I.~Naruse. The relation complements a 
theorem of A.~Kresch-H.~Tamvakis in quantum cohomology. Results of A.~Buch-V.~Ravikumar rule out a similar correspondence in $K$-theory.
\end{abstract}
\date{June 9, 2019}
\maketitle

\section{Introduction}\label{sec:1}

\subsection{Conceptual framework}
Each generalized flag variety ${\sf G}/{\sf B}$ has  finitely many orbits under the left action of the (opposite) Borel subgroup ${\sf B_{-}}$ of a complex reductive Lie group ${\sf G}$. They are indexed by elements $w$ of the Weyl group ${\mathcal W}\cong N({\sf T})/{\sf T}$, 
where ${\sf T}={\sf B}\cap {\sf B}_{-}$ is a maximal torus.  The \emph{Schubert varieties} are closures $X_w$ of these orbits.
The Poincar\'e duals of the Schubert varieties $\{\sigma_w\}_{w\in {\mathcal W}}$ form a ${\mathbb Z}$-linear basis of the cohomology ring
$H^{\star}({\sf G}/{\sf B})$. The \emph{Schubert structure coefficients} are nonnegative integers, defined by
\[\sigma_u \smallsmile \sigma_v=\sum_{w\in {\mathcal W}} c_{u,v}^{w} \sigma_{w}.\]
Geometrically, $c_{u,v}^w\in {\mathbb Z}_{\geq 0}$ counts intersection points of generic translates of three Schubert varieties. 
The main problem of modern Schubert calculus is to combinatorially explain this positivity. For Grassmannians, this is achieved by the
\emph{Littlewood-Richardson rule} \cite{Fulton}.

The title alludes to a principle, traceable to M.~Atiyah-R.~Bott \cite{Atiyah.Bott},
 that \emph{equivariant cohomology} is a lever on ordinary cohomology. In our case, each $X_w$ is ${\sf T}$-stable, so it admits a class $\xi_w$ in
$H^{\star}_{\sf T}({\sf G}/{\sf B})$, the ${\sf T}$-equivariant cohomology ring of ${\sf G}/{\sf B}$. These classes are a basis for 
$H^{\star}_{\sf T}({\sf G}/{\sf B})$ as a module over the base ring $H^{\star}_{\sf T}(pt)$. If $\Delta=\{\alpha_1,\ldots,\alpha_r\}$ are the simple roots
of the root system $\Phi=\Phi^+\cup \Phi^-$ associated to our pinning of ${\sf G}$,  
$H^{\star}_{\sf T}(pt)\cong {\mathbb Z}[\alpha_1,\ldots,\alpha_r]$.
 
Define the \emph{equivariant Schubert structure coefficient} $C_{u,v}^w\in H^{\star}_{\sf T}(pt)$ by
\begin{equation}
\label{eqn:prod}
\xi_u  \cdot \xi_v =\sum_{w\in {\mathcal W}} C_{u,v}^w\ \xi_w.
\end{equation}
If $\ell(u)+\ell(v)=\ell(w)$, $C_{u,v}^w=c_{u,v}^w$. Thus we have a harder version of the main problem, with $\#\Delta$-many parameters.
Does the equivariant complication make the problem simpler? This study initiates our systematic exploration of the question.

The inclusion $({\sf G}/{\sf B})^{\sf T}\hookrightarrow {\sf G}/{\sf B}$ induces an \emph{injective} map 
\begin{equation}
\label{eqn:injective}
H_{\sf T}^{\star}({\sf G}/{\sf B})\hookrightarrow H_{\sf T}^{\star}(({\sf G}/{\sf B})^{\sf T})\cong\bigoplus_{w\in {\mathcal W}}{\mathbb Z}[\alpha_1,\ldots,\alpha_r].
\end{equation}
Thus, each $\xi_w$ is identified with
a $\#{\mathcal W}$-size list of polynomials $(\xi_w|_v)_{v \in {\mathcal W}}$. Multiplication in $H_{\sf T}^{\star}({\sf G}/{\sf B})$ is thereby pointwise multiplication of these lists. Moreover, there is a formula for the \emph{equivariant restriction} $\xi_w|_v$ due to H.~Andersen-J.~Jantzen-W.~Soergel \cite{Soergel}, and rediscovered by S.~Billey~\cite{Billey}.\footnote{Equivariant restriction is part of \emph{GKM-theory} \cite{GKM}, a subject of extensive investigation; see, e.g., J.~Tymoczko's exposition \cite{Tymoczko}, and the references therein, for an account germane to our discussion. However the case of Schubert varieties is found in work of B.~Kostant-S.~Kumar \cite{KK1,KK2}.} Let $I=s_{\underline{\alpha}_{1}}s_{\underline{\alpha}_{2}}\cdots s_{\underline{\alpha}_{\ell(v)}}$
be a reduced word for $v\in {\mathcal W}$, where $s_{\underline{\alpha}_{i}}$ is the reflection through the hyperplane perpendicular to $\underline{\alpha}_i:=\alpha_{j}\in \Delta$ (for some $j$ depending on $i$). 
Then, 
\begin{equation}
\label{eqn:Billey}
\xi_w|_v=\sum_{J\subseteq I} \prod_I({\underline{\alpha}}_i^{\langle i\in J\rangle} s_{\underline{\alpha}_{i}})\cdot 1;
\end{equation}
\emph{cf.}~\cite[Theorem~1]{Knutson}.
The sum is over subwords $J$ that are reduced words for $w$. 
Also, ${\underline{\alpha}}_i^{\langle i\in J\rangle}$ means ${\underline{\alpha}}_i$ appears only if $i\in J$. Combining (\ref{eqn:Billey}) and (\ref{eqn:injective}) provides the combinatorial definition of $H_{\sf T}^{\star}({\sf G}/{\sf B})$ we use.

This description of  $H_{\sf T}^{\star}({\sf G}/{\sf B})$  permits a \emph{non-positive} linear algebraic computation of $C_{u,v}^w$, see, e.g., \cite[Section~6]{Billey}.  From this perspective, the solved combinatorics
of equivariant restriction and the open problem of Schubert calculus seem far apart. However, we argue using an idealization that the concepts are closer than first supposed:

\begin{principle*}
A combinatorial (positivity) statement true of equivariant restrictions
also holds for Schubert structure coefficients.
\end{principle*}

We begin with retrospective examples:

\begin{itemize} 
\item[(I)] Sometimes combining (\ref{eqn:Billey}) with \emph{basic} Coxeter theory realizes an A$\cdot$B$\cdot$C$\cdot$D. 
\emph{Bruhat order} $\leq$ on ${\mathcal W}$ is geometrically defined by $w\leq v$ if $X_w\supseteq X_v$. Fix a reduced word $I$ of $v$.
The \emph{subword property} of Bruhat order states that $w\leq v$ if and only if there exists a subword $J$ of $I$ that is a reduced word of $w$. Hence
 from (\ref{eqn:Billey}),
 \begin{equation}
 \label{eqn:uppertriangularity}
 \xi_w|_v=0 \text{ \ \ unless $w\leq v$.}
 \end{equation}
  Combining (\ref{eqn:uppertriangularity}) and (\ref{eqn:injective}) gives
 \begin{equation}
 \label{eqn:nonzeroness}
 C_{u,v}^w=0 \text{ unless $u\leq w$ and $v\leq w$.}
 \end{equation}
\item[(II)] The \emph{converse} of A$\cdot$B$\cdot$C$\cdot$D is true. Following \cite[Lemma~1]{Knutson}, by (\ref{eqn:uppertriangularity}) and (\ref{eqn:nonzeroness}),
\[\xi_v|_v\cdot \xi_w|_v=C_{v,w}^v\ \xi_v|_v.\] 
By (\ref{eqn:Billey}), $\xi_v|_v\neq 0$. Hence 
\begin{equation}
\label{eqn:Knutson}
C_{v,w}^{v}=\xi_w|_v.
\end{equation}
This is a tantalizing clue about an eventual combinatorial rule for  $C_{u,w}^{v}$. More concretely, in \cite{Knutson.Tao}, (\ref{eqn:Knutson}) implies a recurrence that, with additional combinatorics, proves an equivariant Littlewood-Richardson rule \emph{sans} symmetric functions.
 \item[(III)] Here is a deep instance (\cite{Billey}, \emph{cf.}~\cite[Section~2]{Tymoczko}). It is textbook \cite[Section~1.7]{Humphreys:reflection} that  
\begin{equation}
\label{eqn:inversionset}
{\sf Inv}(v^{-1}):=\{\alpha\in \Phi^+:v^{-1}(\alpha)\in \Phi^-\}=\{s_{\underline{\alpha}_{1}}s_{\underline{\alpha}_{2}}\cdots s_{\underline{\alpha}_{k-1}}\underline{\alpha}_k: 1\leq k\leq \ell(v)\}. 
\end{equation}
 Since each positive root is a positive linear combination of simples, by (\ref{eqn:inversionset}) and (\ref{eqn:Billey}),
\[
\xi_w|_v \in {\mathbb Z}_{\geq 0}[\alpha_1,\ldots,\alpha_r].\]
Indeed, D.~Peterson conjectured, and W.~Graham \cite{Graham} geometrically proved that 
\begin{equation}
\label{eqn:Graham}
C_{u,v}^{w}\in {\mathbb Z}_{\geq 0}[\alpha_1,\ldots,\alpha_r].
\end{equation} 
\item[(IV)] This is closely related to (III), but is folklore. Since $\ell(v)=\ell(v^{-1})=\#{\sf Inv}(v^{-1})$,
by (\ref{eqn:inversionset}), $s_{\underline{\alpha}_{1}}s_{\underline{\alpha}_{2}}\cdots s_{\underline{\alpha}_{k-1}}\underline{\alpha}_k\in \Phi^+$ are all distinct. Hence from (\ref{eqn:Billey}), $\xi_w|_v$ is \emph{square-free} when expressed in the positive roots. As A.~Knutson (private communication) points out, the proof in \cite{Graham} shows this to be true of $C_{u,v}^w$
as well.
\item[(V)] For any ${\sf G}/{\sf B}$, there is a recurrence, due to B.~Kostant and S.~Kumar to compute $\xi_w|_{v}$; it has an analogue for $C_{u,v}^w$ due to A.~Knutson. See
\cite[Theorem~1]{Knutson:patches} and \cite[Section~1]{Knutson}. In turn, special cases of Knutson's recurrence give ``descent cycling'' relations on the \emph{ordinary} Schubert structure constants \cite{Knutson:descent}. 
\end{itemize}

\subsection{Does  A$\cdot$B$\cdot$C$\cdot$D suggest anything new?} Our main instance is of different flavor than (I)--(V). We relate
all structure coefficients of one isotropic flag variety to those of another; this has consequences. The results are
neither explicit in the literature nor seem well-known. The correspondence generalizes, with a new proof, non-equivariant results of P.~Pragacz \cite{Pragacz} and of N.~Bergeron-F.~Sottile \cite{Bergeron.Sottile} (who rely on S.~Billey-M.~Haiman's work \cite{Billey.Haiman}, which in turn 
generalizes \cite{Pragacz}). We emphasize that the correspondence can also be derived from T.~Ikeda-L.~Mihalcea-H.~Naruse's \cite{IMN}; 
see the discussion of Section~\ref{sec:3}.

Consider the classical groups ${\sf G}={\sf SO}_{2n+1}$ and ${\sf G}={\sf Sp}_{2n}$ of non-simply laced type. These
are automorphism groups preserving a non-degenerate bilinear form $\langle \cdot, \cdot \rangle$. In the former case it is a symmetric form on 
$W={\mathbb C}^{2n+1}$ whereas in the latter case it is skew symmetric form on $W={\mathbb C}^{2n}$. A subspace $V\subseteq W$ is \emph{isotropic}
if, for all $v_1,v_2\in V$, $\langle v_1,v_2\rangle =0$. The maximum dimension of an isotropic space is $n$. Any flag
of isotropic subspaces $\langle 0\rangle \subset F_1\subset F_2\subset \cdots\subset F_n$ extends to a complete flag in $W$ by
$\langle 0\rangle \subset F_1\subset F_2\subset \cdots\subset F_n\subseteq F_n^{\perp}\subset F_{n-1}^{\perp}\subset \cdots \subset F_{1}^{\perp}\subset W$,
where $F_k^{\perp}$ is the orthogonal complement of $F_k$. Then the flag manifolds $X={\sf SO}_{2n+1}/{\sf B}$ and $Y={\sf Sp}_{2n}/{\sf B}$ consist of
complete flags of this form.

The root systems for ${\sf SO}_{2n+1}$ (type $B_n$) and ${\sf Sp}_{2n}$ (type $C_n$) are rank $r=n$. Let $\{\beta_1,\ldots,\beta_n\}$
and $\{\gamma_1,\ldots,\gamma_n\}$ be the simples labelled by their respective Dynkin diagrams
\[\!\!\!\!\!\!\!\!\!\!\!\!\!\!\!\!\!\!\!\!\!\!\!\!\!\!\!\!\begin{picture}(30,10)
\thicklines
\put(0,-6){$1$}
\put(0,3){$\circ$}
\put(5,5){\line(1,0){15}}
\put(5,7){\line(1,0){15}}
\put(8,3){$<$}
\put(19,-6){$2$}
\put(19,3){$\circ$}
\put(24,6){\line(1,0){15}}
\put(38,-6){$3$}
\put(38,3){$\circ$}
\put(44,3){$\cdots$}
\put(60,3){$\cdots$}
\put(65,-6){$n\!-\!1$}
\put(74,3){$\circ$}
\put(79,6){\line(1,0){15}}
\put(93,3){$\circ$}
\put(93,-6){$n$}
\end{picture}
\text{ \ \ \ \ \ \ \  \ \ \ \ \ \ \ \ \ \ \ \ \ \ \ \ \ \ \ and \ \  }
\begin{picture}(30,10)
\thicklines
\put(0,-6){$1$}
\put(0,3){$\circ$}
\put(5,5){\line(1,0){15}}
\put(5,7){\line(1,0){15}}
\put(8,3){$>$}
\put(19,-6){$2$}
\put(19,3){$\circ$}
\put(24,6){\line(1,0){15}}
\put(38,-6){$3$}
\put(38,3){$\circ$}
\put(44,3){$\cdots$}
\put(60,3){$\cdots$}
\put(65,-6){$n\!-\!1$}
\put(74,3){$\circ$}
\put(79,6){\line(1,0){15}}
\put(93,3){$\circ$}
\put(93,-6){$n$}
\end{picture}\] 
The two root systems share the \emph{hyperoctahedral group} ${\mathcal B}_n$ as their common Weyl group. We represent 
${\mathcal B}_n$ as \emph{signed permutations} of $\{1,2,\ldots,n\}$, e.g., $\underline{2} \ 1\  \underline{3}$. Define
\[s(w):=\#\{1\leq i\leq n: w(i)<0\}.\]
Let $\overline{f}\in {\mathbb Z}[\beta_1,\beta_2,\ldots,\beta_n]$ be $f\in {\mathbb Z}[\gamma_1,\gamma_2, \ldots, \gamma_n]$ with $\gamma_1\mapsto 2\beta_1$ and $\gamma_i\mapsto \beta_i$ for $1 < i\leq n$.

\begin{theorem}
\label{thm:main}
$C_{u,v}^w(X)=2^{s(w)-s(u)-s(v)}\overline{C_{u,v}^w(Y)}$.
\end{theorem}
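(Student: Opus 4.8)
The plan is to prove Theorem~\ref{thm:main} by comparing the two sides under the GKM/Billey combinatorial description of equivariant cohomology developed in the introduction. Both $X={\sf SO}_{2n+1}/{\sf B}$ and $Y={\sf Sp}_{2n}/{\sf B}$ have Weyl group ${\mathcal B}_n$, so their equivariant cohomology rings embed, via \eqref{eqn:injective}, into $\bigoplus_{w\in{\mathcal B}_n}{\mathbb Z}[\beta_1,\dots,\beta_n]$ and $\bigoplus_{w\in{\mathcal B}_n}{\mathbb Z}[\gamma_1,\dots,\gamma_n]$ respectively, and in each case multiplication is pointwise. Thus everything reduces to understanding the single scalar $\xi_w^X|_v$ versus $\xi_w^Y|_v$ for every pair $v,w\in{\mathcal B}_n$. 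The first step is therefore to establish the ``restriction-level'' identity
\begin{equation}
\label{eqn:planrestr}
\xi_w^X\big|_v \;=\; 2^{\,s(w)}\,\overline{\xi_w^Y\big|_v}\,,
\end{equation}
using the Billey formula \eqref{eqn:Billey} applied to a well-chosen reduced word of $v$ in each type. The key observation is that the Dynkin diagrams for $B_n$ and $C_n$ differ only in the arrow at node $1$, so the reflections $s_1,\dots,s_n$ act on the lattices in ``parallel'' ways, and a reduced word $s_{i_1}\cdots s_{i_{\ell(v)}}$ in type $B$ corresponds to the same word in type $C$. The only discrepancy is in the roots: the type-$B$ positive roots (in the standard coordinates $e_i$) are $e_i\pm e_j$ and $e_i$, while the type-$C$ positive roots are $e_i\pm e_j$ and $2e_i$; under the normalization $\overline{(\cdot)}$ with $\gamma_1\mapsto 2\beta_1$, the ``short'' type-$B$ simple root $\beta_1$ pairs with $\tfrac12\gamma_1$. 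The subtle point is tracking exactly how many factors in each term of Billey's sum \eqref{eqn:Billey} are affected: a subword $J$ that is a reduced word for $w$ contributes a product of $\ell(w)$ roots, and the factor $s(w)$ in \eqref{eqn:planrestr} should emerge because precisely $s(w)$ of the ``inversion roots'' $s_{\underline\alpha_1}\cdots s_{\underline\alpha_{k-1}}\underline\alpha_k$ realized along a reduced word for $w$ are the ``long/short'' roots that get rescaled by $2$ (these correspond to the sign changes $w$ performs, i.e., the negative values of $w$). I would prove this bookkeeping claim by induction on $\ell(w)$, using \eqref{eqn:inversionset} to identify the inversion set of $w^{-1}$ and counting how many of its $\ell(w)$ roots are multiples of a single coordinate $e_i$ versus of the form $e_i\pm e_j$ — the former number is exactly $s(w)$.

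Once \eqref{eqn:planrestr} is in hand, the theorem follows by a short linear-algebra argument. Expand $\xi_u^X\cdot\xi_v^X=\sum_w C_{u,v}^w(X)\,\xi_w^X$ and restrict to an arbitrary $t\in{\mathcal B}_n$: by pointwise multiplication, $\xi_u^X|_t\,\xi_v^X|_t=\sum_w C_{u,v}^w(X)\,\xi_w^X|_t$. Substituting \eqref{eqn:planrestr} everywhere and using that $\overline{(\cdot)}$ is a ring homomorphism ${\mathbb Z}[\gamma_i]\to{\mathbb Z}[\beta_i]$, the left side is $2^{s(u)+s(v)}\,\overline{\xi_u^Y|_t\,\xi_v^Y|_t}=2^{s(u)+s(v)}\,\overline{\sum_w C_{u,v}^w(Y)\,\xi_w^Y|_t}$, while the right side is $\sum_w C_{u,v}^w(X)\,2^{s(w)}\,\overline{\xi_w^Y|_t}$. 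Comparing coefficient-lists over all $t$: since the family $(\overline{\xi_w^Y|_t})_{t\in{\mathcal B}_n}$, $w\in{\mathcal B}_n$, is obtained by applying the (injective, since it only rescales variables) homomorphism $\overline{(\cdot)}$ to the basis $(\xi_w^Y)_w$ of $H^\star_{\sf T}(Y)$, this family is linearly independent over the fraction field; hence we may equate coefficients of each ``$\xi_w$'' to get $C_{u,v}^w(X)\,2^{s(w)}=2^{s(u)+s(v)}\,\overline{C_{u,v}^w(Y)}$, which rearranges to the claimed formula $C_{u,v}^w(X)=2^{s(w)-s(u)-s(v)}\,\overline{C_{u,v}^w(Y)}$. (One should remark that a priori the right side is only a rational function, but since $C_{u,v}^w(X)$ is a polynomial — indeed, by \eqref{eqn:Graham}, in ${\mathbb Z}_{\ge0}[\beta_i]$ — the power of $2$ must be nonnegative, giving as a byproduct the inequality $s(w)\ge s(u)+s(v)$ whenever $C_{u,v}^w\ne0$; this is consistent with, and recovers, the non-equivariant results of Pragacz and Bergeron--Sottile.)

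The main obstacle I anticipate is the precise proof of the bookkeeping identity \eqref{eqn:planrestr}, specifically the claim that the rescaling factor accumulated by Billey's formula is exactly $2^{s(w)}$ and depends only on $w$, not on $v$ or on the chosen reduced word. The danger is that different terms $J$ in the sum \eqref{eqn:Billey} (different reduced subwords for $w$) might a priori pick up different powers of $2$; one must show the power is a reduced-word invariant of $w$. I would handle this by working directly with the root-theoretic form of Billey's formula: each term is $\prod_{k}\big(s_{\underline\alpha_{i_1}}\cdots s_{\underline\alpha_{i_{k-1}}}\underline\alpha_{i_k}\big)$ over the positions $i_k\in J$, and the multiset of roots appearing is exactly ${\sf Inv}(w^{-1})$ (independent of the reduced word, by \eqref{eqn:inversionset}), so the total power of $2$ is $\#\{\alpha\in{\sf Inv}(w^{-1}): \alpha \text{ is a short coordinate root }e_i\}$ in the type-$B$ normalization — and a signed-permutation computation shows $w^{-1}(e_i)\in\Phi^-$ with $e_i$ itself lying in ${\sf Inv}(w^{-1})$ iff $i$ is a value where $w$ (equivalently $w^{-1}$) changes sign, of which there are $s(w^{-1})=s(w)$. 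A secondary technical point is choosing compatible reduced words and reflections so that the correspondence between type-$B$ and type-$C$ Billey expansions is literally term-by-term; I expect this to be routine given the identical Coxeter structure of ${\mathcal B}_n$ in both types, but it requires care in matching the labels $\underline\alpha_i$ across the two Dynkin diagrams.
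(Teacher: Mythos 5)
Your overall strategy is exactly the paper's: first a restriction-level ``power of two'' identity derived from Billey's formula \eqref{eqn:Billey} together with the fact that the number of rescaled root factors in each term of the sum is a reduced-word invariant equal to $s(w)$ (the paper phrases this as $\#\{1\in J\}=s(w)$ for any reduced word $J$ of $w$; your count of coordinate roots in ${\sf Inv}(w^{-1})$ via \eqref{eqn:inversionset} is the same fact in a different guise), and then a transfer to structure constants using pointwise multiplication under \eqref{eqn:injective} and uniqueness of the coefficients. Two corrections are needed. First, the exponent in your restriction-level identity is backwards: since the substitution $\gamma_1\mapsto 2\beta_1$ puts \emph{extra} factors of $2$ into $\overline{\xi_w(Y)|_v}$, the correct statement is $\overline{\xi_w(Y)|_v}=2^{s(w)}\,\xi_w(X)|_v$, equivalently $\xi_w(X)|_v=2^{-s(w)}\,\overline{\xi_w(Y)|_v}$, not $\xi_w(X)|_v=2^{s(w)}\,\overline{\xi_w(Y)|_v}$. (Check $n=1$, $w=s_1$: $\xi_{s_1}(X)|_{s_1}=\beta_1$ while $\overline{\xi_{s_1}(Y)|_{s_1}}=2\beta_1$.) As written, your coefficient comparison actually yields $C_{u,v}^w(X)=2^{s(u)+s(v)-s(w)}\,\overline{C_{u,v}^w(Y)}$, and your final ``rearrangement'' silently flips the sign a second time; the two errors cancel, but each step should be fixed.

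Second, the parenthetical ``byproduct'' is false: $C_{u,v}^w\neq 0$ does \emph{not} force $s(w)\geq s(u)+s(v)$. A negative power of $2$ is harmless because $\overline{C_{u,v}^w(Y)}$ acquires factors of $2$ from $\gamma_1\mapsto 2\beta_1$ (and, nonequivariantly, $c_{u,v}^w(Y)$ may simply be even). Indeed Example~\ref{exa:abc123} has $s(w)-s(u)-s(v)=-1$ with all coefficients nonzero. Everything else in your plan --- the term-by-term matching of the two Billey expansions, the reduced-word invariance of the count $s(w)$, and the linear-independence/uniqueness step --- is sound and is what the paper does.
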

\begin{proof}
This equivalence is from the definitions:
\[\xi_w(Y)|_{x} = \sum_{J\subseteq I} \prod_{I} ({\underline{\gamma}_i}^{\langle i\in J\rangle} s_{\underline{\alpha}_{i}})\cdot 1\iff \overline{\xi_w(Y)|_{x}} = \sum_{J\subseteq I} 2^{\# \{1\in J \}}\prod_{I} ({\underline{\beta}_i}^{\langle i\in J\rangle} s_{\underline{\alpha}_{i}})\cdot 1.\]
The Coxeter combinatorics needed is merely this: since $J$ is a reduced word for $w$, it is true that
$\#\{1\in J\} =s(w)$.
Therefore, 
\begin{equation}
\label{eqn:therefore}
 \overline{\xi_w(Y)|_{x} } = 2^{s(w)} \sum_{J\subseteq I} \prod_{I} ({\underline{\beta}_i}^{\langle i\in J\rangle} s_{\underline{\alpha}_{i}})\cdot 1 = 2^{s(w)} \xi_w(X)|_x;
 \end{equation}
\emph{i.e.}, a ``power of two relationship'' between the restrictions.
 Applying (\ref{eqn:prod}), (\ref{eqn:injective}) and (\ref{eqn:Billey}) to $Y$, 
\[\xi_u(Y)|_x\cdot \xi_v(Y)|_x=\sum_{w\in {\mathcal B}_n} C_{u,v}^w(Y)\ \xi_w(Y)|_x \ \  \ \ \ \forall x\in {\mathcal B}_n\]
\begin{align}\nonumber
\iff & \overline{\xi_u(Y)|_x}\cdot \overline{\xi_v(Y)|_x}=\sum_{w\in {\mathcal B}_n} \overline{C_{u,v}^w(Y)}\ \overline{\xi_w(Y)|_x} \ \  \ \ \ \forall x\in {\mathcal B}_n\\ \nonumber
\iff & (2^{-s(u)}\overline{\xi_u(Y)|_x})\cdot (2^{-s(v)}\overline{\xi_v(Y)|_x})=\sum_{w\in {\mathcal B}_n} 2^{s(w)-s(u)-s(v)}\overline{C_{u,v}^w(Y)}(2^{-s(w)}\overline{\xi_w(Y)|_x}) \ \ \ \forall x\in {\mathcal B}_n\\ \nonumber
\iff & \xi_u(X)|_x\cdot \xi_v(X)|_x=\sum_{w\in {\mathcal B}_n} 2^{s(w)-s(u)-s(v)}\overline{C_{u,v}^w(Y)}\ \xi_w(X)|_x \text{\ \ \ 
$\forall x\in {\mathcal B}_n$
 \ \ \ [by (\ref{eqn:therefore})]}.
\end{align}
We are now done by (\ref{eqn:prod}), (\ref{eqn:injective}) and (\ref{eqn:Billey}) applied to $X$, \emph{i.e.}, uniqueness of the equivariant structure coefficients.
\end{proof}

\begin{example} \label{exa:abc123} Consider $u=3\ \underline{2}\ 1, v=\underline{3}\  \underline{2} \ 1$ and $w=\underline{2}\ \underline{3} \ 1$ in ${\mathcal B}_3$. Then 
$s(w)-s(u)-s(v)=-1$  and
$C_{u,v}^w(Y)=2\gamma_1\gamma_2^2+2\gamma_1\gamma_2\gamma_3+4\gamma_2^3+6\gamma_2^2\gamma_3+2\gamma_2\gamma_3^2$, so
\[\overline{C_{u,v}^w(Y)}=4\beta_1\beta_2^2+4\beta_1\beta_2\beta_3+4\beta_2^3+6\beta_2^2\beta_3+2\beta_2\beta_3^2.\]
We also have
\[{C_{u,v}^w(X)}=2\beta_1\beta_2^2+2\beta_1\beta_2\beta_3+2\beta_2^3+3\beta_2^2\beta_3+\beta_2\beta_3^2.\] 
Hence $C_{u,v}^w(X)=2^{-1}\overline{C_{u,v}^w(Y)}$,
in agreement with Theorem~\ref{thm:main}.\qed
\end{example}

Since the correspondence of Theorem~\ref{thm:main} respects Graham-positivity,

\begin{corollary}\gap
\[[\beta_1^{i_1}\cdots \beta_n^{i_n}]C_{u,v}^w(X)=0 \iff  [\gamma_1^{i_1}\cdots \gamma_n^{i_n}]C_{u,v}^w(Y)=0.\] 
In particular,
$C_{u,v}^{w}(X)= 0\iff C_{u,v}^w(Y)= 0$.
\end{corollary}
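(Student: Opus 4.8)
The plan is to read this off directly from Theorem~\ref{thm:main}. First I would record the effect of the bar operation on monomials: the assignment $\gamma_1\mapsto 2\beta_1$, $\gamma_i\mapsto\beta_i$ ($1<i\leq n$) is the ring homomorphism $\mathbb{Z}[\gamma_1,\ldots,\gamma_n]\to\mathbb{Z}[\beta_1,\ldots,\beta_n]$ that merely rescales one variable, so it carries the monomial $\gamma_1^{i_1}\cdots\gamma_n^{i_n}$ to $2^{i_1}\beta_1^{i_1}\cdots\beta_n^{i_n}$. Hence, for every $f\in\mathbb{Z}[\gamma_1,\ldots,\gamma_n]$,
\[[\beta_1^{i_1}\cdots\beta_n^{i_n}]\,\overline{f}=2^{i_1}\,[\gamma_1^{i_1}\cdots\gamma_n^{i_n}]\,f.\]

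Next I would combine this identity with Theorem~\ref{thm:main} to obtain
\[[\beta_1^{i_1}\cdots\beta_n^{i_n}]\,C_{u,v}^w(X)=2^{\,s(w)-s(u)-s(v)}\,[\beta_1^{i_1}\cdots\beta_n^{i_n}]\,\overline{C_{u,v}^w(Y)}=2^{\,s(w)-s(u)-s(v)+i_1}\,[\gamma_1^{i_1}\cdots\gamma_n^{i_n}]\,C_{u,v}^w(Y).\]
A power of two is a nonzero rational number, so the left coefficient vanishes if and only if the right one does; this is precisely the displayed equivalence. The only subtlety worth a sentence is that the exponent $s(w)-s(u)-s(v)+i_1$ could be negative a priori, but Theorem~\ref{thm:main} already guarantees that $C_{u,v}^w(X)$ is a genuine polynomial with integer coefficients, so no real denominators appear, and in any case multiplying a polynomial by a nonzero scalar preserves its support (the set of monomials with nonzero coefficient).

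Finally, for the ``in particular'' clause I would simply note that a polynomial is the zero polynomial exactly when all of its monomial coefficients vanish; applying the equivalence above over all tuples $(i_1,\ldots,i_n)$ then gives $C_{u,v}^w(X)=0\iff C_{u,v}^w(Y)=0$. There is no genuine obstacle here — the corollary is a formal consequence of Theorem~\ref{thm:main} — so the ``hard part'' amounts only to the bookkeeping of the extra factor $2^{i_1}$ produced by the rescaling of $\gamma_1$ and the observation that scaling by a unit of $\mathbb{Q}$ does not affect which coefficients are zero.
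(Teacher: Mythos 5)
Your proposal is correct and follows the paper's route: the corollary is read off directly from Theorem~\ref{thm:main}, the only point being that the bar substitution and the prefactor $2^{s(w)-s(u)-s(v)}$ rescale each monomial coefficient by a nonzero power of $2$, which preserves vanishing. The paper justifies this in one line by noting the correspondence ``respects Graham-positivity''; your purely formal coefficient bookkeeping (with the extra $2^{i_1}$) is a fine, and if anything more self-contained, way of saying the same thing.
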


Let $X'={\sf OG}(n,2n+1)$ be the \emph{maximal orthogonal Grassmannian} of $n$-dimensional subspaces of ${\mathbb C}^{2n+1}$
 that are isotropic with respect to a nondegenerate symmetric form. Also, let $Y'={\sf LG}(n,2n)$ be the \emph{Lagrangian Grassmannian}
 of $n$-dimensional subspaces of ${\mathbb C}^{2n}$
 that are isotropic with respect to a nondegenerate skew symmetric form. 
A \emph{strict partition} is an integer partition $\lambda=(\lambda_1>\lambda_2>\ldots > \lambda_{\ell})$. 
 The Schubert varieties and their (equivariant) cohomology
 classes are indexed by such $\lambda$ with $\lambda_1\leq n$ and $\ell\leq n$.
Let $\ell(\lambda)$ be the number of (nonzero) parts of a strict partition $\lambda$. 

\begin{corollary}[\emph{cf.}~Conjecture~5.1 of \cite{Monical}]
\label{cor:Harshit}
$C_{\lambda,\mu}^{\nu}(X')=2^{\ell(\nu)-\ell(\lambda)-\ell(\mu)}\overline{C_{\lambda,\mu}^{\nu}(Y')}$. 
\end{corollary}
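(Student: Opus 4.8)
The plan is to deduce Corollary~\ref{cor:Harshit} from Theorem~\ref{thm:main} by restricting attention to the maximal parabolic quotients that present $X'$ and $Y'$ as flag varieties of the relevant parabolic type. Concretely, $X' = {\sf SO}_{2n+1}/{\sf P}$ and $Y' = {\sf Sp}_{2n}/{\sf P}$, where ${\sf P}$ is the maximal parabolic corresponding to omitting the simple root labelled $1$ in each Dynkin diagram (the ``short'' end in type $B_n$, the ``long'' end in type $C_n$). The Weyl group of ${\sf P}$ is the same in both types --- it is the symmetric group $S_n$ generated by the reflections $s_2, \ldots, s_n$ --- so the minimal-length coset representatives ${\mathcal B}_n / S_n$ are indexed identically, and the standard bijection identifies these representatives with strict partitions $\lambda$ with $\lambda_1 \leq n$. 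Under this bijection, a short calculation in signed-permutation combinatorics shows that the statistic $s(w)$ on the minimal coset representative $w$ corresponding to $\lambda$ equals $\ell(\lambda)$: the negative entries of $w$ are exactly recorded by the parts of $\lambda$.

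First I would record the standard fact (e.g. from the theory of parabolic pushforwards, or directly from \cite{IMN}) that equivariant Schubert structure coefficients for ${\sf G}/{\sf P}$ agree with those for ${\sf G}/{\sf B}$ when all three indices $u,v,w$ are minimal-length coset representatives: the projection $\pi : {\sf G}/{\sf B} \to {\sf G}/{\sf P}$ satisfies $\pi^\star \xi^{{\sf G}/{\sf P}}_{wW_{\sf P}} = \xi^{{\sf G}/{\sf B}}_w$ for $w$ minimal in $wW_{\sf P}$, and $\pi^\star$ is an injective ring map on the relevant classes, so $C^{\nu}_{\lambda,\mu}(X') = C^{w(\nu)}_{w(\lambda), w(\mu)}(X)$ where $w(\cdot)$ denotes the minimal coset representative attached to a strict partition, and likewise for $Y'$. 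Second, I would invoke Theorem~\ref{thm:main} for these three representatives in ${\mathcal B}_n$, giving $C^{w(\nu)}_{w(\lambda),w(\mu)}(X) = 2^{s(w(\nu)) - s(w(\lambda)) - s(w(\mu))} \overline{C^{w(\nu)}_{w(\lambda),w(\mu)}(Y)}$. Third, I would substitute $s(w(\lambda)) = \ell(\lambda)$ (and similarly for $\mu, \nu$) to reach the stated exponent $\ell(\nu) - \ell(\lambda) - \ell(\mu)$. Finally, one must check that the bar operation is compatible across the two settings --- but since the bar operation is defined purely on the polynomial ring $\mathbb{Z}[\gamma_1,\ldots,\gamma_n]$ and the coefficients $C^{\nu}_{\lambda,\mu}(Y')$ live there as honest polynomials (they are the $B$-equivariant coefficients, pulled back), this is immediate.

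The main obstacle, and the only point requiring genuine care, is justifying that the equivariant structure constants really are preserved under the parabolic projection $\pi^\star$. The subtlety is that $\pi^\star$ need not be injective on all of $H^\star_{\sf T}({\sf G}/{\sf P})$ a priori --- what one needs is that $\pi^\star$ sends the Schubert basis indexed by minimal coset representatives to part of the Schubert basis of ${\sf G}/{\sf B}$, and that it is a ring homomorphism; then comparing the expansion of $\xi^{{\sf G}/{\sf P}}_\lambda \cdot \xi^{{\sf G}/{\sf P}}_\mu$ before and after applying $\pi^\star$, together with linear independence of the $\xi^{{\sf G}/{\sf B}}_w$, forces the coefficients to coincide. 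This is standard (it is, for instance, implicit in \cite{IMN} and in the GKM description --- one can even see it directly from the Billey-type formula~(\ref{eqn:Billey}), since restricting $\xi^{{\sf G}/{\sf P}}_\lambda$ to a ${\sf T}$-fixed point of ${\sf G}/{\sf P}$ and then to one of ${\sf G}/{\sf B}$ lying over it reproduces $\xi^{{\sf G}/{\sf B}}_{w(\lambda)}|_x$), but it should be stated explicitly rather than taken for granted. The remaining combinatorial identity $s(w(\lambda)) = \ell(\lambda)$ is a routine check against the explicit description of ${\mathcal B}_n/S_n$ and can be dispatched in a sentence.
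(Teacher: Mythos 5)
Your argument is correct and is essentially the paper's proof: pull back along $X \twoheadrightarrow X'$ (resp.\ $Y \twoheadrightarrow Y'$) to identify $C_{\lambda,\mu}^{\nu}(X')$ with $C_{w_\lambda,w_\mu}^{w_\nu}(X)$, apply Theorem~\ref{thm:main} to the minimal coset representatives, and use $s(w_\lambda)=\ell(\lambda)$. The only difference is one of emphasis --- the paper asserts in a single sentence the compatibility of structure constants under the parabolic projection that you carefully justify via injectivity of $\pi^\star$ on Schubert classes and linear independence.
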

\begin{proof}
The map $X\twoheadrightarrow X'$ that forgets all subspaces of a complete flag in $X$ except the $n$-th induces 
$H_{\sf T}(X')\hookrightarrow H_{\sf T}(X)$ sending Schubert classes to Schubert classes. The image of $\xi_\lambda(X')$
is $\xi_{w_{\lambda}}(X)$ where 
$w_{\lambda}\in {\mathcal B}_n$ is the unique ascending signed permutation beginning as $-\lambda_1,-\lambda_2,\ldots,-\lambda_{\ell}$, followed by
positive integers in increasing order.  Therefore, $C_{\lambda,\mu}^{\nu}(X')=C_{w_{\lambda},w_{\mu}}^{w_{\nu}}(X)$. 
Similarly, $C_{\lambda,\mu}^{\nu}(Y')=C_{w_{\lambda},w_{\mu}}^{w_{\nu}}(Y)$.
Hence, the result follows from Theorem~\ref{thm:main} since by definition of $w_{\lambda}$, $\ell(\lambda)=s(w_{\lambda})$. 
\end{proof}

\begin{example} Let $n=3$ and $\lambda=(3,2), \mu=(2,1)$, and $\nu=(3,2,1)$.\footnote{Hence $w_{\lambda}=\underline{3}\ \underline{2}\ 1=s_2 s_1 s_3 s_2 s_1$, $w_{\mu}=\underline{2}\ \underline{1} \ 3=s_1 s_2 s_1$ and $w_{\nu}=\underline{3} \ \underline{2} \ \underline{1}=s_1 s_2 s_1 s_3 s_2 s_1$.}
Then 
$\ell(\nu)-\ell(\lambda)-\ell(\mu)=-1$. 
Now, 
$C_{\lambda,\mu}^{\nu}(Y')=3\gamma_1^2 + 10\gamma_1\gamma_2 + 8\gamma_2^2 + 5\gamma_1\gamma_3 + 8\gamma_2\gamma_3 + 2\gamma_3^2$, 
so
\[\overline{C_{\lambda,\mu}^{\nu}(Y)}=12\beta_1^2 + 20\beta_1\beta_2 + 8\beta_2^2 + 10\beta_1\beta_3 + 8\beta_2\beta_3 + 2\beta_3^2.\]
We also have
\[{C_{\lambda,\mu}^{\nu}(X')}=6\beta_1^2 + 10\beta_1\beta_2 + 4\beta_2^2 + 5\beta_1\beta_3 + 4\beta_2\beta_3 + \beta_3^2,\] so $C_{\lambda,\mu}^{\nu}(X')=2^{-1}\overline{C_{\lambda,\mu}^{\nu}(Y')}$, agreeing with Corollary~\ref{cor:Harshit}.\qed
\end{example}

Corollary~\ref{cor:Harshit} says that the open problems of giving (Graham positive) combinatorial rules to compute $C_{\lambda,\mu}^{\nu}(X')$ and  
$C_{\lambda,\mu}^{\nu}(Y')$ are  equivalent.

Moreover, Corollary~\ref{cor:Harshit} makes exact a conjecture stated in the thesis of C.~Monical \cite[Conjecture~5.1]{Monical}. In that thesis, one also finds 
\cite[Conjecture~5.3]{Monical}, a 
conjectural recursive list of inequalities characterizing nonzeroness of $C_{\lambda,\mu}^{\nu}(X')$ (and implicitly, $C_{\lambda,\mu}^{\nu}(Y')$). 
That conjecture generalizes work of K.~Purbhoo-F.~Sottile \cite{Purbhoo.Sottile}. It is an analogue of D.~Anderson-E.~Richmond-A.~Yong
\cite{ARY} that extends work of A.~Klyachko \cite{Klyachko} and A.~Knutson-T.~Tao \cite{Knutson.Tao:99} on the eigenvalue problem
for sums of Hermitian matrices. Corollary~\ref{cor:Harshit} proves:
\begin{corollary}\emph{(\emph{cf.}~\cite[Conjecture~5.3]{Monical})}
C.~Monical's inequalities characterize $C_{\lambda,\mu}^{\nu}(X')\neq 0$ if and only if
they characterize $C_{\lambda,\mu}^{\nu}(Y')\neq 0$.
\end{corollary}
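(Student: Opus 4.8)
The plan is to deduce this corollary directly from Corollary~\ref{cor:Harshit}, exactly as the two preceding corollaries were deduced. The key observation is that C.~Monical's inequalities form a recursive list depending only on the pair $(\lambda,\mu,\nu)$ of strict partitions and the ambient parameter $n$, with no reference to the coefficient values themselves; they are a purely combinatorial predicate $P(\lambda,\mu,\nu)$. The content of \cite[Conjecture~5.3]{Monical} is the assertion ``$P(\lambda,\mu,\nu)$ holds $\iff C_{\lambda,\mu}^{\nu}(X')\neq 0$.'' So what must be shown is that this biconditional is equivalent to the biconditional ``$P(\lambda,\mu,\nu)$ holds $\iff C_{\lambda,\mu}^{\nu}(Y')\neq 0$.''

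The main (and essentially only) step is to invoke Corollary~\ref{cor:Harshit}: since
\[
C_{\lambda,\mu}^{\nu}(X')=2^{\ell(\nu)-\ell(\lambda)-\ell(\mu)}\overline{C_{\lambda,\mu}^{\nu}(Y')},
\]
and the bar operation $f\mapsto\overline{f}$ (substituting $\gamma_1\mapsto 2\beta_1$, $\gamma_i\mapsto\beta_i$) together with multiplication by the nonzero scalar $2^{\ell(\nu)-\ell(\lambda)-\ell(\mu)}$ carries nonzero polynomials to nonzero polynomials and zero to zero, we get
\[
C_{\lambda,\mu}^{\nu}(X')\neq 0 \iff C_{\lambda,\mu}^{\nu}(Y')\neq 0.
\]
(This is also recorded as the last sentence of the Corollary following Theorem~\ref{thm:main}.) Consequently, for any fixed combinatorial predicate $P$ not involving the coefficients, $P\iff (C_{\lambda,\mu}^{\nu}(X')\neq 0)$ if and only if $P\iff(C_{\lambda,\mu}^{\nu}(Y')\neq 0)$, because two predicates equivalent to the same predicate are equivalent to each other, and vice versa. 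Taking $P$ to be Monical's list of inequalities finishes the proof.

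I expect no serious obstacle here; the corollary is a formal consequence. The only point requiring a modicum of care is the implicit claim that Monical's inequalities genuinely are a $Y'$-independent combinatorial condition — i.e.\ that \cite[Conjecture~5.3]{Monical}, though phrased for $X'={\sf OG}(n,2n+1)$, is about the pair of strict partitions and not secretly about the orthogonal geometry in a way that would not transport. Since the inequalities are modeled on those of \cite{ARY} and \cite{Purbhoo.Sottile}, which are recursively generated from Bruhat-order/partition data alone, this is immediate, and the ``(and implicitly, $C_{\lambda,\mu}^{\nu}(Y')$)'' parenthetical already in the discussion of \cite[Conjecture~5.3]{Monical} signals exactly this reading. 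Thus the proof is: ``Immediate from Corollary~\ref{cor:Harshit}, since the bar map and multiplication by $2^{\ell(\nu)-\ell(\lambda)-\ell(\mu)}$ preserve (non)vanishing, so $C_{\lambda,\mu}^{\nu}(X')\neq 0\iff C_{\lambda,\mu}^{\nu}(Y')\neq 0$; hence any combinatorial condition characterizing one characterizes the other.''
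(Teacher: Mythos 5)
Your proposal is correct and matches the paper's reasoning: the corollary is stated as an immediate consequence of Corollary~\ref{cor:Harshit} (the paper simply writes ``Corollary~\ref{cor:Harshit} proves:''), the essential point being exactly the equivalence $C_{\lambda,\mu}^{\nu}(X')\neq 0\iff C_{\lambda,\mu}^{\nu}(Y')\neq 0$ that you isolate. Your additional remark that Monical's inequalities are a coefficient-independent combinatorial predicate is a reasonable bit of due diligence, but nothing beyond the formal deduction is needed.
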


Here is another consequence of Theorem~\ref{thm:main}.
C.~Li-V.~Ravikumar \cite{Ravikumar} prove equivariant Pieri rules for  (submaximal) 
isotropic Grassmannians of classical type $B,C,D$.  Their type $B$ and $C$ rules are proved by separate geometric analyses. 
Theorem~\ref{thm:main} immediately implies a Pieri rule for type $C$ from the type $B$ rule (or vice versa).

The ``power of two'' relationship between $X'$ and $Y'$ does not hold (in any obvious way) in the Grothendieck ($K$-theory) ring of algebraic vector bundles;
see work of A.~Buch-V.~Ravikumar \cite[Examples~4.9, 5.8]{Buch.Ravikumar}. On the other hand, Theorem~\ref{thm:main} may be compared to the 
quantum cohomology result of A.~Kresch-H.~Tamvakis
\cite[Theorem~6]{Kresch.Tamvakis}.

\section{More examples of A$\cdot$B$\cdot$C$\cdot$D\MakeLowercase{s}}\label{sec:2}

\subsection{Inclusion of Dynkin diagrams}
Suppose we have an inclusion\footnote{a (multi)-graph theoretic injection that respects arrows} of (finite) Dynkin diagrams $D\hookrightarrow E$ where the nodes $1,2,\ldots,r(D)$ of $D$ are sent to the nodes $1^\circ,2^\circ,\ldots,r(D)^\circ$
of $E$, respectively. Let
\[\Delta(D)=\{\alpha_1,\ldots,\alpha_{r(D)}\}  \text{ \ and  \  $\Delta(E)=\{\beta_{1^\circ},\ldots,\beta_{r(D)^\circ},\beta_{(r(D)+1)^{\circ}},\ldots\beta_{r(E)^\circ}\}$}.\]
Given $w\in {\mathcal W}(D)$ we can unambiguously define
$w^\circ\in {\mathcal W}(E)$ by taking a reduced word $I$ for $w$ and replacing $s_{\alpha_i}$ with $s_{\beta_{i^\circ}}$ to obtain a reduced word $I^\circ$ for $w^\circ$. Let 
\[\psi_{D,E}:{\mathbb Z}[\alpha_1,\ldots,\alpha_{r(D)}]\to {\mathbb Z}[\beta_{1^\circ},\ldots,\beta_{r(D)^\circ}]\] 
be defined by $\alpha_i\mapsto\beta_{i^\circ}$. 

\begin{theorem}
\label{thm:inclusion}
$\psi_{D,E}(C_{u,v}^w(D))=C_{u^\circ,v^\circ}^{w^\circ}(E)$.
\end{theorem}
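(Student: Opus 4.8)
The plan is to mirror the proof of Theorem~\ref{thm:main}, replacing the specific ``doubling'' map $f\mapsto\overline f$ with the inclusion-induced map $\psi_{D,E}$, and exploiting the fact that $\psi_{D,E}$ is compatible with Billey's formula (\ref{eqn:Billey}) node-by-node. First I would record the key ``restriction transfers'' statement: for all $w,x\in{\mathcal W}(D)$,
\begin{equation}
\label{eqn:restrictiontransfer}
\psi_{D,E}\bigl(\xi_w(D)|_x\bigr)=\xi_{w^\circ}(E)|_{x^\circ}.
\end{equation}
This is where essentially all the content sits, so it deserves care. Fix a reduced word $I=s_{\underline\alpha_1}\cdots s_{\underline\alpha_{\ell(x)}}$ for $x$; then $I^\circ=s_{\underline\beta_{1^\circ}}\cdots s_{\underline\beta_{\ell(x)^\circ}}$ is a reduced word for $x^\circ$ by definition of $(\cdot)^\circ$. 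By (\ref{eqn:Billey}), $\xi_w(D)|_x$ is a sum over subwords $J\subseteq I$ that are reduced words for $w$, of products of simple roots $\underline\alpha_i$ (for $i\in J$) twisted by the action of the $s_{\underline\alpha_i}$; and the analogous statement holds for $\xi_{w^\circ}(E)|_{x^\circ}$ over subwords $J^\circ\subseteq I^\circ$. The two sums are in bijection: $J\subseteq I$ is a reduced word for $w$ if and only if $J^\circ\subseteq I^\circ$ is a reduced word for $w^\circ$, since $(\cdot)^\circ$ is a length-preserving bijection on reduced words induced by the Dynkin inclusion. It then remains to check that $\psi_{D,E}$ intertwines the two product expressions term-by-term; this follows because $\psi_{D,E}$ sends $\underline\alpha_i$ to $\underline\beta_{i^\circ}$, and because the reflection $s_{\underline\alpha_i}$ acts on $\mathrm{span}(\Delta(D))$ by the same Cartan-integer formula that $s_{\underline\beta_{i^\circ}}$ uses on the subspace $\mathrm{span}(\beta_{1^\circ},\ldots,\beta_{r(D)^\circ})\subseteq\mathrm{span}(\Delta(E))$ --- here the hypothesis that $D\hookrightarrow E$ respects edges and arrows is exactly what guarantees the relevant Cartan integers agree. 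So $\psi_{D,E}$, applied to a single term $\prod_I(\underline\alpha_i^{\langle i\in J\rangle}s_{\underline\alpha_i})\cdot 1$, produces $\prod_{I^\circ}(\underline\beta_{i^\circ}^{\langle i\in J\rangle}s_{\underline\beta_{i^\circ}})\cdot 1$, and summing over the bijection of subwords gives (\ref{eqn:restrictiontransfer}).

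With (\ref{eqn:restrictiontransfer}) in hand, the rest is the same uniqueness argument as in Theorem~\ref{thm:main}, now slightly simpler because there is no power-of-two bookkeeping. Apply (\ref{eqn:prod}), (\ref{eqn:injective}), (\ref{eqn:Billey}) to $D$ to get, for every $x\in{\mathcal W}(D)$,
\[
\xi_u(D)|_x\cdot\xi_v(D)|_x=\sum_{w\in{\mathcal W}(D)}C_{u,v}^w(D)\,\xi_w(D)|_x.
\]
Apply the ring homomorphism $\psi_{D,E}$ to both sides and use (\ref{eqn:restrictiontransfer}):
\[
\xi_{u^\circ}(E)|_{x^\circ}\cdot\xi_{v^\circ}(E)|_{x^\circ}=\sum_{w\in{\mathcal W}(D)}\psi_{D,E}\bigl(C_{u,v}^w(D)\bigr)\,\xi_{w^\circ}(E)|_{x^\circ}.
\]
On the other hand, the defining relation in $H^\star_{\sf T}(E/{\sf B})$ restricted at $x^\circ$ reads
\[
\xi_{u^\circ}(E)|_{x^\circ}\cdot\xi_{v^\circ}(E)|_{x^\circ}=\sum_{w'\in{\mathcal W}(E)}C_{u^\circ,v^\circ}^{w'}(E)\,\xi_{w'}(E)|_{x^\circ}.
\]
The obstacle to concluding directly is that the first identity only ranges over the ``sub-Weyl-group'' image $\{w^\circ:w\in{\mathcal W}(D)\}\subseteq{\mathcal W}(E)$, not over all of ${\mathcal W}(E)$, and a priori it only holds for $x$ of the form $x^\circ$, not for all elements of ${\mathcal W}(E)$. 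So I cannot invoke uniqueness of equivariant structure constants over $E$ verbatim. I would resolve this by instead arguing directly: by (\ref{eqn:nonzeroness}) applied in $E$, $C_{u^\circ,v^\circ}^{w'}(E)=0$ unless $u^\circ\le w'$ and $v^\circ\le w'$; one shows (a short Bruhat-order argument using the subword property and that $(\cdot)^\circ$ is a length-preserving embedding compatible with the subword characterization) that any such $w'$ lies in the parabolic subgroup generated by $s_{\beta_{1^\circ}},\ldots,s_{\beta_{r(D)^\circ}}$ --- indeed $w'\ge u^\circ$ forces, via reduced-subword considerations, that $w'$ uses only those generators provided $u^\circ$ and $v^\circ$ do --- so $w'=w^\circ$ for a unique $w\in{\mathcal W}(D)$, namely the element of that parabolic with the corresponding reduced word. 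Wait: this needs that $w'$ in that parabolic, as an element of ${\mathcal W}(E)$, determines $w\in{\mathcal W}(D)$ uniquely and that $\xi_{w'}(E)|_{x^\circ}=\xi_{w^\circ}(E)|_{x^\circ}$ --- both hold because the parabolic subgroup generated by $\{s_{\beta_{i^\circ}}\}$ is isomorphic as a Coxeter group to ${\mathcal W}(D)$ under the Dynkin inclusion. Thus both displayed sums are supported on $\{w^\circ:w\in{\mathcal W}(D)\}$ with coefficients matching after applying (\ref{eqn:restrictiontransfer}) for all $x\in{\mathcal W}(D)$; since the lists $(\xi_{w^\circ}(E)|_{x^\circ})_{x\in{\mathcal W}(D)}$ are linearly independent over the fraction field (an upper-triangularity argument from (\ref{eqn:uppertriangularity}) using that $x\le y\iff x^\circ\le y^\circ$ within the parabolic), we can equate coefficients, giving $\psi_{D,E}(C_{u,v}^w(D))=C_{u^\circ,v^\circ}^{w^\circ}(E)$.

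The main obstacle, then, is not the restriction-transfer identity (\ref{eqn:restrictiontransfer}), which is a routine node-by-node check once one sets up Billey's formula carefully, but rather the descent from ``equality of two GKM-restriction identities indexed over the sub-Weyl-group, tested only at points $x^\circ$'' to ``equality of structure constants in ${\mathcal W}(E)$.'' The cleanest route is the parabolic-support argument sketched above: show that the $E$-structure constants $C_{u^\circ,v^\circ}^{w'}(E)$ vanish unless $w'$ lies in the parabolic $\langle s_{\beta_{i^\circ}}:1\le i\le r(D)\rangle\cong{\mathcal W}(D)$, and then run the uniqueness argument inside that parabolic, where the GKM restrictions at the points $x^\circ$ separate classes. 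An alternative, if one wants to avoid the parabolic bookkeeping, is to quote \cite{IMN} or the Kostant--Kumar description and observe that the nil-Hecke / moment-graph structure of the parabolic submanifold $E/{\sf B}$ restricted to the relevant torus-fixed points literally reproduces that of $D/{\sf B}$; but the self-contained Coxeter argument above is in the spirit of items (I)--(II) of the introduction and is what I would write.
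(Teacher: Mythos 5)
Your restriction-transfer identity is exactly the paper's Claim~\ref{claim:restrictionversion}, proved the same way, and your concluding step (collapse to the parabolic, then separate classes by upper-triangularity of restrictions) is in substance the paper's uniqueness argument. The genuine gap is the step you use to discard the terms $w'\notin{\mathcal W}(E)_D:=\langle s_{\beta_{1^\circ}},\ldots,s_{\beta_{r(D)^\circ}}\rangle$ from the $E$-side expansion. You claim $C_{u^\circ,v^\circ}^{w'}(E)=0$ for such $w'$ because ``$w'\ge u^\circ$ forces, via reduced-subword considerations, that $w'$ uses only those generators.'' This reads the subword property backwards: $w'\ge u^\circ$ says a reduced word of $u^\circ$ occurs as a subword of one for $w'$; it is a \emph{lower} bound on $w'$ and puts no constraint on which generators $w'$ may use. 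The asserted vanishing is in fact false. Take $D=A_1\hookrightarrow E=A_2$ (node $1\mapsto 1$) and $u=v=s_1$. Then ${\mathcal W}(E)_D=\{e,s_1\}$ contains no element of length $2$, yet $\sigma_{s_1}\smallsmile\sigma_{s_1}=x_1^2\ne 0$ in $H^{\star}({\sf SL}_3/{\sf B})$, so the product is supported entirely on classes $\sigma_{w'}$ with $w'\notin{\mathcal W}(E)_D$; the coefficients you want to kill are nonzero.

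What actually makes those terms disappear is the paper's Claim~\ref{claim:abc}: for $w'\in{\mathcal W}(E)\setminus{\mathcal W}(E)_D$ and $x\in{\mathcal W}(D)$, every reduced word of $w'$ uses some $s_{\beta_{t^\circ}}$ with $t>r(D)$, while no reduced word of $x^\circ$ does, so no subword of a reduced word of $x^\circ$ is a reduced word of $w'$ and (\ref{eqn:Billey}) gives $\xi_{w'}(E)|_{x^\circ}=0$. That is, the \emph{restrictions} vanish at the test points $x^\circ$, not the structure constants, and this is enough because your identity is only being tested at those points. With that substitution your argument closes: both sides become linear combinations of the linearly independent vectors $\bigl(\xi_{w^\circ}(E)|_{x^\circ}\bigr)_{x\in{\mathcal W}(D)}$, $w\in{\mathcal W}(D)$, and coefficients can be equated. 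One further comparison: the paper runs the last step in the opposite direction, applying $\psi_{D,E}^{-1}$ to the $E$-side identity, which forces it to first establish $C_{u^\circ,v^\circ}^{w^\circ}(E)\in{\mathbb Z}_{\ge 0}[\beta_{1^\circ},\ldots,\beta_{r(D)^\circ}]$ via Graham positivity (\ref{eqn:Graham}); your direction (push the $D$-side forward by $\psi_{D,E}$ and equate coefficients) legitimately avoids that appeal, which is a small simplification once the vanishing of restrictions is in place.
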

\begin{proof}
We start with the restriction version of the statement, \emph{i.e.},

\begin{claim}
\label{claim:restrictionversion}
$\psi_{D,E}(\xi_w(D)|_v)=\xi_{w^\circ}(E)|_{v^\circ}$.
\end{claim}
\noindent\emph{Proof of Claim~\ref{claim:restrictionversion}:} This 
is immediate from
(\ref{eqn:Billey}) using $I$ and $I^\circ$ respectively in computing $\xi_w(D)|_v$ and $\xi_{w^\circ}(E)|_{v^\circ}$. This is since the inclusion of Dynkin diagrams induces a canonical isomorphism of the root system of $D$ with a subroot system of $E$ that maps $\alpha_i$ to
$\beta_{i^{\circ}}$, and a canonical isomorphism of ${\mathcal W}(D)$ with the parabolic subgroup ${\mathcal W}(E)_{D}$ of ${\mathcal W}(E)$ generated by
$s_{\beta_i^\circ}$ for $1\leq i\leq r(D)$; see, \emph{e.g.}, \cite[Section~5.5]{Humphreys:reflection}.\qed

\begin{claim}
\label{claim:abc}
If $w\in {\mathcal W}(E)-{\mathcal W}(E)_D$ and $v\in {\mathcal W}(D)$ then $\xi_w(E)|_{v^{\circ}}=0$.
\end{claim}
\noindent
\emph{Proof of Claim~\ref{claim:abc}:} Since $w\in {\mathcal W}(E)-{\mathcal W}(E)_D$, by definition any reduced word for $w$ involves a $s_{\beta_{t^\circ}}$ for some $t>r(D)$. 
Fix any reduced word $I^{\circ}$ of $v^{\circ}$. Since $v^{\circ}\in {\mathcal W}(E)_D$, $I^{\circ}$ does not involve $s_{\beta_{t^{\circ}}}$. Hence no subword of $I^{\circ}$
can be a reduced word for $w$. Now the claim follows from (\ref{eqn:Billey}).\qed

Combining Claims~\ref{claim:restrictionversion} and~\ref{claim:abc} implies that for any $u,v, x\in {\mathcal W}(D)$,
\begin{equation}
\label{eqn:hhh}
\xi(E)_{u^\circ}|_{x^{\circ}} \cdot \xi(E)_{v^{\circ}}|_{x^{\circ}}=\sum_{w^{\circ}\in {\mathcal W}(E)_D} C_{u^{\circ},v^{\circ}}^{w^{\circ}}(E)\ \xi(E)_{w^{\circ}}|_{x^{\circ}} \ \ \ \forall x\in {\mathcal W}(D).
\end{equation}
By Claim~\ref{claim:restrictionversion}, for all $y\in {\mathcal W}(D)$,
\[\xi(E)_{y^{\circ}}|_{x^{\circ}} \in {\mathbb Z}_{\geq 0}[\beta_{1^{\circ}},\ldots,\beta_{r(D)^{\circ}}].\]
Therefore by this nonnegativity and W.~Graham's theorem (\ref{eqn:Graham}), it must be that 
\[C_{u^{\circ},v^{\circ}}^{w^{\circ}}(E)\in {\mathbb Z}_{\geq 0}[\beta_{1^{\circ}},\ldots,\beta_{r(D)^{\circ}}].\] 
Therefore, it makes sense to apply $\psi^{-1}_{D,E}$ to both sides of (\ref{eqn:hhh}) to obtain
\begin{equation}
\label{eqn:jjk}
\xi(D)_u|_x \cdot \xi(D)_v|_x=\sum_{w\in {\mathcal W}(D)} \psi_{D,E}^{-1}(C_{u^{\circ},v^{\circ}}^{w^{\circ}}(E))\ \xi(D)_w|_{x} \ \ \ \forall x\in {\mathcal W}(D).
\end{equation}
We can now conclude as in the proof of Theorem~\ref{thm:main}.
By uniqueness of the structure constants, (\ref{eqn:jjk}) asserts 
\[\psi_{D,E}^{-1}(C_{u^{\circ},v^{\circ}}^{w^{\circ}}(E))=C_{u,v}^w(D).\] 
Apply $\psi_{D,E}$ to both sides to conclude the proof.
\end{proof}

\begin{example}
The Dynkin diagram for $F_4$ is \ \ 
$\begin{picture}(36,10)
\thicklines
\put(0,-4){$1$}
\put(0,5){$\circ$}
\put(5,8){\line(1,0){15}}
\put(19,-4){$2$}
\put(19,5){$\circ$}
\put(24,7){\line(1,0){15}}
\put(24,9){\line(1,0){15}}
\put(27,5){$<$}
\put(38,-4){$3$}
\put(38,5){$\circ$}
\put(43,8){\line(1,0){15}}
\put(57,-4){$4$}
\put(57,5){$\circ$ }
\end{picture}$ \ \ \ \ \ \  \ \  \ .  Now, there is an embedding of $D=B_3$ into $E=F_4$ given by $1\mapsto 1^{\circ}=2, 2\mapsto 2^{\circ}=3, 3\mapsto 3^{\circ}=4$. One computes that
\[C_{s_1 s_2 s_1, s_2 s_3 s_1}^{s_1 s_2 s_3 s_1}(B_3)=2\beta_1^2 +3\beta_1 \beta_2 + \beta_2^2 \text{ \ and \ }
C_{s_2 s_3 s_2, s_3 s_4 s_2}^{s_{2} s_3 s_4 s_2}(F_4)=2\zeta_2^2 +3\zeta_2 \zeta_3 + \zeta_3^2.\]
These are equal after $\beta_i\mapsto \zeta_{i+1}$ for $1\leq i\leq 3$, in agreement with the Theorem~\ref{thm:inclusion}. 
\qed
\end{example}

Besides being computationally useful, Theorem~\ref{thm:inclusion} is a guiding property in the search for an eventual combinatorial rule
for $C_{u,v}^w$. See \cite[Section~5.2]{Thomas.Yong:comin} for hints of this in the root-system uniform (non-equivariant) rule for the special case of minuscule
flag varieties.

There are coincidences between types $B_n$ and $D_{n+1}$, since the Dynkin diagram of the former is the
``folding'' of the Dynkin diagram for the latter:
\[\!\!\!\!\!\!\!\!\!\!\!\!\!\!\!\!\!\!\!\!\!\!\!\!\!\!\!\begin{picture}(30,18)
\thicklines
\put(-5,-8){$1$}
\put(-5, 11){$2$}
\put(0,12){$\circ$}
\put(0,-7){$\circ$}
\put(20,5){\line(-2,-1){15}}
\put(20,7){\line(-2,1){15}}
\put(19,-6){$3$}
\put(19,3){$\circ$}
\put(24,6){\line(1,0){15}}
\put(38,-6){$4$}
\put(38,3){$\circ$}
\put(44,3){$\cdots$}
\put(60,3){$\cdots$}
\put(73,-6){$n$}
\put(74,3){$\circ$}
\put(79,6){\line(1,0){15}}
\put(93,3){$\circ$}
\put(86,-6){$n\!+\!1$}
\end{picture}\] 

\smallskip

\begin{example}
$C_{s_1 s_2 s_1,s_1 s_2 s_1}^{s_1 s_2 s_1}(B_2)=\beta_1(2\beta_1+\beta_2)(\beta_1+\beta_2)$. It is natural to compare
$s_1 s_2 s_1 \in {\mathcal W}(B_2)$ with $s_1 s_3 s_2 \in {\mathcal W}(D_3)$. Indeed, 
\[C_{s_1 s_3 s_2,s_1 s_3 s_2}^{s_1 s_3 s_2}(D_3)=\delta_1(\delta_1+\delta_2+\delta_3)(\delta_1+\delta_3)\] 
equals $C_{s_1 s_2 s_1,s_1 s_2 s_1}^{s_1 s_2 s_1}(B_2)$
under the ``folding substitution'' $\delta_1,\delta_2\mapsto \beta_1$ and $\delta_3\mapsto \beta_2$. \qed
\end{example}

Such a 
substitution gives a correspondence between ${\sf OG}(n,2n+1)$ restrictions and a subset of restrictions of ${\sf OG}(n+1,2n+2)$ (the 
maximal isotropic Grassmannian of type $D_{n+1}$); see \cite[Remark 5.7]{Graham.Kreiman} and the references therein. By the A$\cdot$B$\cdot$C$\cdot$D 
argument as in Theorem~\ref{thm:main}, one obtains a correspondence of structure coefficients. Unfortunately, this correspondence is not
true in general, even for restrictions:

\begin{example} One calculates that
\[\xi_{s_2 s_1 s_2 s_3}|_{s_2 s_1 s_2 s_3}(B_3)=4\beta_1^3 \beta_2+10\beta_1^2\beta_2^2+2\beta_1^2\beta_2 \beta_3 +8 \beta_1\beta_2^3+ 3\beta_1\beta_2^2\beta_3+2\beta_2^4+\beta_2^3\beta_3.\]
By direct search, there is no $\xi_v|_v(D_4)$ which, after the folding substitution $\delta_1,\delta_2\mapsto \beta_1, \delta_3\mapsto \beta_2, \delta_4\mapsto\beta_3$, has even the same monomial support as 
$\xi_{s_2 s_1 s_2 s_3}|_{s_2 s_1 s_2 s_3}(B_3)$.\qed
\end{example}

\subsection{Nonvanishing}

The result is known, \emph{cf}.~\cite[Corollary 4.5]{Billey} which credits \cite{KK1}. We include a proof to be self-contained.
\begin{proposition} \label{prop:interval} 
$\xi_w|_{v}\neq 0$ for all $w\leq v\leq w_0$.
\end{proposition}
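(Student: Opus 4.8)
The plan is to combine Billey's formula~(\ref{eqn:Billey}) with the subword property of Bruhat order and the Graham-positivity observations recorded in items~(III) and~(IV) of the introduction. Observe first that in the finite Weyl group ${\mathcal W}$ the hypothesis $v\leq w_0$ is automatic, so the real content is the assumption $w\leq v$.

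First I would fix a reduced word $I=s_{\underline{\alpha}_1}\cdots s_{\underline{\alpha}_{\ell(v)}}$ for $v$. By the subword property of Bruhat order (as used in item~(I)), the assumption $w\leq v$ guarantees that the index set appearing in~(\ref{eqn:Billey}) -- the set of subwords $J\subseteq I$ that are reduced words for $w$ -- is \emph{nonempty}; fix one such $J_0$.

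Next I would inspect a single summand of~(\ref{eqn:Billey}). As recalled in item~(III), moving the reflections to the right converts the $J$-summand $\prod_I(\underline{\alpha}_i^{\langle i\in J\rangle}s_{\underline{\alpha}_i})\cdot 1$ into the product $\prod_{k\in J}\bigl(s_{\underline{\alpha}_1}\cdots s_{\underline{\alpha}_{k-1}}\underline{\alpha}_k\bigr)$ of $\ell(w)$ positive roots (distinct, by item~(IV), although distinctness is not needed here). Since every positive root is a nonnegative integer combination of $\alpha_1,\ldots,\alpha_r$ and ${\mathbb Z}[\alpha_1,\ldots,\alpha_r]$ is an integral domain, each summand is a \emph{nonzero} polynomial all of whose coefficients, in the monomial basis of ${\mathbb Z}[\alpha_1,\ldots,\alpha_r]$, are $\geq 0$ (the empty-subword case $w=e$ simply gives the constant $1$).

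Finally I would conclude by a no-cancellation argument: $\xi_w|_v$ is a finite sum of monomial-nonnegative polynomials, and the $J_0$-summand is nonzero, so no monomial occurring in it can be cancelled by the remaining summands; hence $\xi_w|_v\neq 0$. The only step requiring any care is this last one, and it is immediate once items~(III)--(IV) are available -- which is precisely why they precede this Proposition -- so I do not anticipate a genuine obstacle.
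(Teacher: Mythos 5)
Your proof is correct and uses the same essential ingredients as the paper's: Billey's formula, the subword property of Bruhat order to guarantee a nonempty index set, and the fact that each summand is a product of positive roots (hence monomial-nonnegative and nonzero), so no cancellation can occur. The paper phrases this as a monotonicity of monomial support up the Bruhat order anchored at the nonzero monomial $\xi_w|_w$, whereas you argue directly from a single nonzero summand $J_0$; this is a purely presentational difference.
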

\begin{proof}
Suppose $v\leq v'$ and fix a reduced word $I'$ for $v'$. By the subword property of Bruhat order, there is a subword $I$ of $I'$ which is reduced for $v$. Any subword
$J$ of $I$ that is a reduced word for $w$ is also a subword of $I'$. Thus, by (\ref{eqn:Billey}), 
any monomial appearing in $\xi_w|_v$ associated to $J$ corresponds to a maybe different monomial (in the positive roots) in $\xi_w|_{v'}$. 
Now use that (\ref{eqn:Billey}) says $\xi_w|_{w}$ is a nonzero monomial. 
\end{proof}

\begin{conjecture}[A$\cdot$B$\cdot$C$\cdot$D version of Proposition~\ref{prop:interval}]
\label{conj:anotherinterval}
Assume $C_{u,v}^{w}\neq 0$.
\begin{itemize}
\item[(I)] $C_{u,s_{\alpha}v}^{w}\neq 0$ when $v<s_{\alpha}v\leq w$ and $\alpha\in \Delta$.
\item[(II)] If $\ell(w)<\ell(u)+\ell(v)$ then there exists $s_{\alpha}$ ($\alpha\in \Delta$) with $s_{\alpha}v<v$ such that $C_{u,s_{\alpha}v}^w\neq 0$.
\end{itemize}
\end{conjecture}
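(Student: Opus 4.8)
The plan is to lift the combinatorial content of Proposition~\ref{prop:interval} to the level of structure coefficients, exactly as (\ref{eqn:Knutson}) and Knutson's recurrence suggest. For part (I), suppose $C_{u,v}^w \neq 0$ and $v < s_\alpha v \leq w$ with $\alpha\in\Delta$. I would evaluate the product $\xi_u \cdot \xi_v = \sum_{z} C_{u,v}^z \xi_z$ via (\ref{eqn:injective}) at the point $x = s_\alpha v$. By (\ref{eqn:uppertriangularity}), the restriction $\xi_z|_{s_\alpha v}$ vanishes unless $z \leq s_\alpha v$, and by the same token $\xi_u|_{s_\alpha v}\neq 0$ since $u\leq w$ forces $u\leq v$... (actually only $u\leq w$; one needs $u\leq s_\alpha v$, which is not automatic) — so the first subtlety is that the obvious evaluation point may not dominate $u$. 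The natural fix is to instead compare the expansions of $\xi_u\cdot\xi_v$ and $\xi_u\cdot\xi_{s_\alpha v}$ using a Chevalley-type / Leibniz-type relation linking $\xi_{s_\alpha v}$ to $\xi_v$, namely the Monk/Chevalley formula $\xi_{s_\alpha}\cdot \xi_v = \sum \langle\cdots\rangle \xi_{v'}$ together with associativity, so that the coefficient of $\xi_w$ on the right-hand side of $(\xi_{s_\alpha}\xi_v)\xi_u = \xi_{s_\alpha}(\xi_v\xi_u)$ is forced to pick up a positive multiple of $C_{u,v}^w$ (using Graham positivity (\ref{eqn:Graham}), which guarantees no cancellation among the nonnegative polynomial coefficients). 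Since the Chevalley coefficient multiplying $\xi_v\mapsto\xi_{s_\alpha v}$ in the up-direction $v<s_\alpha v$ is a nonzero positive root, and $s_\alpha v\leq w$ keeps $w$ in range, the coefficient $C_{u,s_\alpha v}^w$ must be nonzero.

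For part (II), assume $C_{u,v}^w\neq 0$ with $\ell(w) < \ell(u)+\ell(v)$, i.e. the coefficient is a genuinely positive-degree polynomial in the $\alpha_i$. The idea is again associativity: write $\xi_u\xi_v = \sum_z C_{u,v}^z\xi_z$ and restrict at $x = w$ (legitimate since $u,v\leq w$ by (\ref{eqn:nonzeroness})). By (\ref{eqn:Knutson}), $\xi_v|_w = C_{w,v}^w$ and $\xi_u|_w = C_{w,u}^w$, and (\ref{eqn:uppertriangularity}) collapses the sum to $z\leq w$. Now I would use the Chevalley formula in the \emph{down} direction: expand $\xi_u = \xi_u$ and deg-shift by noting that a positive-degree coefficient cannot be accounted for purely by length-additive (cohomological) terms, so some $\xi_z$ with $z\neq w$, $z\leq w$, $\ell(z)<\ell(w)$ contributes; applying Chevalley to move from such a $z$ back toward $v$ produces a term $\xi_{s_\alpha v}$ with $s_\alpha v < v$ carrying a nonzero coefficient, which by Graham positivity survives. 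The clean way to package this is: apply the Leibniz/divided-difference operator $\partial_\alpha$ (or equivalently the $\xi_{s_\alpha}$-multiplication recurrence of Knutson, \cite{Knutson:patches}) to both sides of $\xi_u\xi_v = \sum C_{u,v}^z\xi_z$, choosing $\alpha$ a descent of $v$; since $\ell(w)<\ell(u)+\ell(v)$ the "leading" equivariant term is nonzero and forces $C_{u,s_\alpha v}^w\neq 0$ for a suitable such $\alpha$.

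The main obstacle, and the reason this is stated as a conjecture rather than a theorem, is precisely the step where one wants to conclude that a particular coefficient is nonzero from an identity among \emph{sums} of coefficients: even with Graham positivity ruling out cancellation, the Chevalley/Monk expansion of $\xi_{s_\alpha}\cdot \xi_v$ (for general ${\sf G}/{\sf B}$, not a Grassmannian) has several terms, and after multiplying by $\xi_u$ and extracting the $\xi_w$-coefficient one gets $\sum_{v'} (\text{positive root coeff})\cdot C_{u,v'}^w$, in which $C_{u,s_\alpha v}^w$ is only one summand — so one must additionally argue that $C_{u,s_\alpha v}^w$ is the \emph{unique} summand that could be nonzero, or that its contribution cannot be cancelled (which Graham positivity gives only if all other $C_{u,v'}^w$ with the right degree are known to vanish). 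Making that vanishing argument — likely via a careful Bruhat-order analysis of which $v'$ appear in the Chevalley formula and satisfy simultaneously $u\leq w$, $v'\leq w$ — is the crux, and it is exactly the kind of non-positive-to-positive leap that the A$\cdot$B$\cdot$C$\cdot$D principle predicts should go through but which the present techniques do not obviously establish. I would therefore expect a full proof to require either an honest geometric (Graham-style) argument or a new combinatorial identity beyond Chevalley.
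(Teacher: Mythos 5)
The statement you are trying to prove is Conjecture~\ref{conj:anotherinterval}: the paper offers no proof of it, only exhaustive verification in $A_4$, $B_3$, $G_2$ and spot checks in $A_5$, $B_4$, $F_4$. So there is no ``paper proof'' to compare against, and your proposal should be judged on whether it closes the conjecture. It does not, and you have correctly located the reason yourself; let me make the gap precise. For part (I), the associativity identity you propose, extracting the coefficient of $\xi_w$ from $(\xi_{s_i}\cdot\xi_v)\cdot\xi_u=\xi_{s_i}\cdot(\xi_v\cdot\xi_u)$, yields
\begin{equation*}
\sum_{v\lessdot v'}\langle\omega_i,\beta^\vee\rangle\, C_{u,v'}^{w}
=\bigl(\xi_{s_i}|_{w}-\xi_{s_i}|_{v}\bigr)C_{u,v}^{w}+\sum_{z\lessdot w}\langle\omega_i,\beta^\vee\rangle\, C_{u,v}^{z},
\end{equation*}
where the left sum runs over \emph{all} Bruhat covers $v'=vs_\beta$ of $v$ appearing in the equivariant Chevalley formula, most of which come from non-simple reflections $s_\beta$. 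Even granting that the right-hand side is nonzero and Graham-positive, you conclude only that \emph{some} cover $v'$ of $v$ satisfies $C_{u,v'}^{w}\neq 0$ --- and that cover need not be of the form $s_\alpha v$ with $\alpha$ simple, let alone the \emph{specific} $s_\alpha v$ demanded by (I), which must work for \emph{every} simple $\alpha$ with $v<s_\alpha v\leq w$. No choice of the Chevalley class $\xi_{s_i}$ isolates the single summand $C_{u,s_\alpha v}^{w}$, and Graham positivity cannot rule out that this particular summand vanishes while another carries the whole right-hand side. The same difficulty, which you name as ``the crux,'' is exactly why the statement is a conjecture.

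Part (II) of your sketch has an additional unaddressed problem: the divided-difference operators obey the twisted Leibniz rule $\partial_\alpha(fg)=\partial_\alpha(f)\,g+(s_\alpha f)\,\partial_\alpha(g)$, and the operator $s_\alpha$ does not preserve Graham positivity, so the ``no cancellation'' bookkeeping you invoke breaks down at the first step; moreover ``a positive-degree coefficient cannot be accounted for purely by length-additive terms'' is an assertion about the target $C_{u,s_\alpha v}^{w}$, not an argument for it. Your final paragraph is an accurate diagnosis rather than a proof, and the honest conclusion is that the proposal reduces the conjecture to a vanishing statement (that all competing $C_{u,v'}^{w}$ in the Chevalley expansion are controlled) which is itself of the same order of difficulty as the conjecture. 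If you want unconditional progress, the tractable special case is the Grassmannian one, where the paper notes the conjecture is known via \cite{ARY}; there the cover structure of Bruhat order on minimal coset representatives is rigid enough that the Chevalley sum genuinely collapses.
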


\begin{example}
In Conjecture~\ref{conj:anotherinterval}, the existential quantification in (II) is needed. In type $B_3$,
\[C_{s_2 s_3, s_1s_3}^{s_2 s_1 s_3}=\beta_2+\beta_3, \text{   \ but \ $C_{s_2 s_3, s_1(s_1 s_3)}^{s_2 s_1 s_3}=0$.}\] 
Now, 
$C_{s_2 s_3, s_3(s_1 s_3)}^{s_2 s_1 s_3}=1$, as predicted. \qed
\end{example}
We exhaustively checked Conjecture~\ref{conj:anotherinterval} for $A_4, B_3$ and $G_2$ and for many examples in $A_5, B_4$ and $F_4$.
Conjecture~\ref{conj:anotherinterval} holds for Grassmannians, where it plays a key role in \cite{ARY}, which connects \cite{Friedland} to the equivariant structure coefficients. C.~Monical's extension, discussed in Section~\ref{sec:1}, motivates this conjecture.

\begin{example}
There is no ``righthand version'' of either part of Conjecture~\ref{conj:anotherinterval}. For (I),
\[C_{s_1s_2,s_1}^{s_1s_2s_1}(A_2)=1 \text{ \ but $C_{s_1s_2, (s_1)s_2}^{s_1s_2s_1}(A_2)=0$.}\] 
Whereas for (II), $C_{s_1s_2,s_2s_1}^{s_1s_2s_1}(A_2)=\alpha_1+\alpha_2$
yet $C_{s_1s_2,(s_2s_1)s_1}^{s_1s_2s_1}(A_2)=0$.\qed
\end{example}

Proposition~\ref{prop:interval} implies that, for the classical types, the decision problem ${\tt Restriction}$ ``$\xi_{w}|_v\neq 0?$'' is in the class
${\sf P}$ of polynomial time problems.\footnote{For complexity purposes, the expectional types are ignored since they are finite in number.} This is since there is a polynomial time \emph{tableau criterion} for deciding if $w\leq v$ for corresponding Weyl groups; see \cite[Chapters 2, 8]{Bjorner.Brenti} (here the input size is bounded by a polynomial in $r$). The A$\cdot$B$\cdot$C$\cdot$D version of this claim 
concerns the decision problem ${\tt Nonvanishing}$:  ``$C_{u,v}^w\neq 0$?'' given input $u,v,w\in {\mathcal W}$ (in one line notation). 
\begin{conjecture}
\label{conj:ABCDnonvanishing}
For each classical Lie type, {\tt Nonvanishing}$\ \in{\sf P}$.
\end{conjecture}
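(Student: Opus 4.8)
The first step I would take is to use the A$\cdot$B$\cdot$C$\cdot$D correspondences already established to collapse the four infinite classical families into two. The Corollary to Theorem~\ref{thm:main} shows that {\tt Nonvanishing} for the type $C_n$ flag variety \emph{is} the same decision problem as for type $B_n$, since the factor $2^{s(w)-s(u)-s(v)}$ never vanishes. And Theorem~\ref{thm:inclusion}, applied to the Dynkin inclusion $A_m\hookrightarrow D_{m+1}$, gives $C_{u,v}^w(A_m)=0\iff C_{u^\circ,v^\circ}^{w^\circ}(D_{m+1})=0$, where $u\mapsto u^\circ$ is computed in polynomial time by replacing a reduced word with its image word; hence a polynomial-time algorithm for type $D$ also decides type $A$. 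So it suffices to place {\tt Nonvanishing} in ${\sf P}$ for types $B$ and $D$. (One cannot similarly fold $D_{n+1}$ onto $B_n$: folding is not an inclusion of Dynkin diagrams, so Theorem~\ref{thm:inclusion} does not apply, and indeed the examples following that theorem show the naive folding correspondence fails already for restrictions.)

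For each of the two remaining families the plan has two layers: pass from the equivariant to the non-equivariant problem, then certify non-equivariant nonvanishing in polynomial time. The first layer is exactly the role of Conjecture~\ref{conj:anotherinterval}: granting it, if $C_{u,v}^w\neq0$ with $\ell(u)+\ell(v)>\ell(w)$, part (II) repeatedly replaces $v$ by $s_\alpha v<v$ ($\alpha\in\Delta$) while keeping the coefficient nonzero, so after $\ell(u)+\ell(v)-\ell(w)$ steps one reaches $v'$ with $v'\le_L v$ in left weak order, $\ell(v')=\ell(w)-\ell(u)$, and $c_{u,v'}^w=C_{u,v'}^w\neq0$; conversely part (I) lifts any such $v'$ back up to $v$ along a left-weak chain whose terms are automatically $\le w$. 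Thus, modulo Conjecture~\ref{conj:anotherinterval}, $C_{u,v}^w\neq0$ if and only if $u\le w$, $v\le w$ (by (\ref{eqn:nonzeroness})), and some $v'\le_L v$ has $\ell(v')=\ell(w)-\ell(u)$ and $c_{u,v'}^w\neq0$ (take $v'=v$ when $\ell(u)+\ell(v)=\ell(w)$). Bruhat and weak order comparison are polynomial in classical type \cite[Chapters~2,~8]{Bjorner.Brenti}, so what remains is to prove Conjecture~\ref{conj:anotherinterval}; to decide, without enumerating the (possibly exponentially large) left-weak interval below $v$, whether \emph{some} $v'$ in it of the right length has $c_{u,v'}^w\neq0$ — perhaps by driving the descent in (II) greedily with only polynomial branching, or by a structural description of the admissible $v'$; and to decide $c_{u,v'}^w\neq0$ for the flag varieties of types $B$ and $D$ in polynomial time.

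The last of these — non-equivariant nonvanishing — is the crux, and the route I would take here bypasses the reduction above and attacks $C_{u,v}^w\neq0$ directly, exploiting Graham-positivity: $C_{u,v}^w\neq0$ iff its specialization at $\alpha_1=\cdots=\alpha_n=1$ is positive, so one wants a \emph{polytopal / recursive inequality description} of nonvanishing of the kind that already works for Grassmannians. For ${\sf OG}(n,2n+1)$ and ${\sf LG}(n,2n)$ this is C.~Monical's \cite[Conjecture~5.3]{Monical}, the isotropic analogue of the Anderson--Richmond--Yong inequalities \cite{ARY} (which rely on Conjecture~\ref{conj:anotherinterval} in the Grassmannian case and connect \cite{Friedland}); by Corollary~\ref{cor:Harshit} one now only has to establish it for one of the two Grassmannians, and the submaximal isotropic cases are further constrained by the Li--Ravikumar Pieri rules \cite{Ravikumar} and by Purbhoo--Sottile \cite{Purbhoo.Sottile}. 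The genuinely new ingredient to supply is the extension of such a recursive inequality system from parabolic quotients to the \emph{full} flag varieties of types $B$ and $D$, ideally root-system-uniformly in the spirit sought in \cite[Section~5.2]{Thomas.Yong:comin}: a recursion whose every level is a linear program of size polynomial in $n$, with only polynomially many levels, would then put {\tt Nonvanishing} in ${\sf P}$.

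I expect the main obstacle to be precisely that extension. Even non-equivariantly and in type $A$, deciding $c_{u,v}^w\neq0$ for the complete flag variety is not known to lie in ${\sf P}$ — there is no Littlewood--Richardson rule — so outside the Grassmannian and a few other special geometries the base case of the whole scheme is itself missing; and Conjecture~\ref{conj:anotherinterval}, which would strip away the equivariant parameters, is open as well. The A$\cdot$B$\cdot$C$\cdot$D reductions of the first paragraph eliminate half the classical families and organize the problem, but they do not touch this combinatorial core, so I anticipate that any proof of Conjecture~\ref{conj:ABCDnonvanishing} will arrive together with, or as a consequence of, a genuine positive-combinatorial advance on $c_{u,v}^w$ itself.
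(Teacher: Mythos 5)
This statement is a conjecture in the paper --- described there as ``highly speculative'' and known only for Grassmannians \cite{Adve.Robichaux.Yong:P} --- so there is no proof in the paper to compare yours against, and your proposal, as you yourself acknowledge, is a programme rather than a proof. The parts that are actually established are the reductions: the Corollary to Theorem~\ref{thm:main} does make {\tt Nonvanishing} for $B_n$ and $C_n$ the same decision problem, and Theorem~\ref{thm:inclusion} applied to $A_m\hookrightarrow D_{m+1}$ does let a polynomial-time algorithm for type $D$ decide type $A$ (computing $u\mapsto u^{\circ}$ from one-line notation is polynomial). Your biconditional reduction of the equivariant problem to the non-equivariant one, granting Conjecture~\ref{conj:anotherinterval}, is also correctly argued: part (II) descends $v$ in left weak order to some $v'$ with $\ell(u)+\ell(v')=\ell(w)$, and part (I) lifts back up, the intermediate terms being $\leq w$ because weak order refines Bruhat order.

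The genuine gaps are the three you name, and they are not incidental. Conjecture~\ref{conj:anotherinterval} is itself open (checked in the paper only in small rank, and known for Grassmannians via \cite{ARY}); the existential search over the left-weak-order ideal below $v$ has no evident polynomial certificate or greedy strategy; and, most seriously, non-equivariant nonvanishing $c_{u,v}^w\neq 0$ for \emph{full} flag varieties is open in every classical type, including $A$ --- there is no Littlewood--Richardson rule, no inequality description, and no nontrivial complexity bound, so the base case of your scheme does not exist. The paper's own (implicit) route is different from yours: it treats Conjecture~\ref{conj:ABCDnonvanishing} as the A$\cdot$B$\cdot$C$\cdot$D shadow of the fact that {\tt Restriction} is in ${\sf P}$ (Proposition~\ref{prop:interval} plus the tableau criterion for Bruhat order), and suggests approaching it through SNP (Conjecture~\ref{conj:second}) together with a halfspace description of ${\sf Newton}(C_{u,v}^w)$, which would first place the problem in ${\sf NP}\cap{\sf coNP}$. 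Both your plan and the paper's are sketches of the same open problem; neither constitutes a proof, and you should present this as a conditional reduction and a research direction, not as an argument for the conjecture.
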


Conjecture~\ref{conj:ABCDnonvanishing} is highly speculative. That said, it holds for Grassmannians \cite{Adve.Robichaux.Yong:P}. In our opinion, this conjecture is related to the (testable) Conjecture~\ref{conj:second} given below.

\subsection{Counterexamples to A$\cdot$B$\cdot$C$\cdot$D} It is interesting to study situations where A$\cdot$B$\cdot$C$\cdot$D is (seemingly) false. For instance, here is a true statement about restrictions:  

\begin{theorem}[Monotonicity]
\label{prop:monotonicity}
If $w\leq v\leq v'$ then 
$\xi_w|_{v'} - \xi_w|_v \in {\mathbb Z}_{\geq 0}[\alpha_1,\ldots,\alpha_r]$.
\end{theorem}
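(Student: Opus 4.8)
The plan is to proceed exactly as in Proposition~\ref{prop:interval}, tracking not just nonvanishing of monomials but the full polynomial. Fix a reduced word $I'$ for $v'$, and, using the subword property of Bruhat order (twice: $w\leq v\leq v'$), select a reduced word $I$ for $v$ that is a subword of $I'$. The key bookkeeping device is Billey's formula (\ref{eqn:Billey}): $\xi_w|_v=\sum_{J}\prod_I(\underline{\alpha}_i^{\langle i\in J\rangle}s_{\underline{\alpha}_i})\cdot 1$, where $J$ ranges over subwords of $I$ that are reduced for $w$. Since $I\subseteq I'$, every such $J$ is also a subword of $I'$ that is reduced for $w$, so it contributes a term to $\xi_w|_{v'}$ as well. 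Thus there is an injection from the index set of monomials in $\xi_w|_v$ into that of $\xi_w|_{v'}$.

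First I would make precise the comparison between the monomial $m_J(v)$ of $J$ computed along $I$ and the monomial $m_J(v')$ of the same $J$ computed along $I'$. Writing $I'=s_{\underline{\alpha}'_1}\cdots s_{\underline{\alpha}'_{\ell(v')}}$ and identifying $I$ with the positions in a subset $P\subseteq\{1,\dots,\ell(v')\}$, the term for $J\subseteq P$ along $I'$ is $\prod_{i\in J}\bigl(s_{\underline{\alpha}'_1}\cdots s_{\underline{\alpha}'_{i-1}}\underline{\alpha}'_i\bigr)$ — a product of $\ell(w)$ positive roots drawn from the inversion set of $(v')^{-1}$ via (\ref{eqn:inversionset}). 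The term along $I$ is the analogous product but with the ``skipped'' reflections (those at positions in $\{1,\dots,i-1\}\setminus P$) deleted from the prefix. So $m_J(v)$ and $m_J(v')$ are each products of $\ell(w)$ positive roots, root by root related by applying the extra skipped reflections. The content of the argument is that each such root of $m_J(v)$, when the skipped reflections are reinserted, becomes a positive root (this is precisely the GKM/Graham-positivity mechanism already invoked in item (III) of the introduction: every term of $\xi_w|_{v'}$ lies in ${\mathbb Z}_{\ge 0}[\alpha_1,\dots,\alpha_r]$), so $m_J(v')\in{\mathbb Z}_{\ge 0}[\alpha_1,\dots,\alpha_r]$; and the ``difference'' bookkeeping shows $\xi_w|_{v'}-\xi_w|_v$ equals $\sum_J\bigl(m_J(v')-m_J(v)\bigr)$ plus $\sum_{J'}m_{J'}(v')$ over the $J'\subseteq I'$ reduced for $w$ that are \emph{not} subwords of $I$ — and the latter sum is manifestly in ${\mathbb Z}_{\ge 0}[\alpha_1,\dots,\alpha_r]$ by (III).

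Hence the crux reduces to showing $m_J(v')-m_J(v)\in{\mathbb Z}_{\ge 0}[\alpha_1,\dots,\alpha_r]$ for a fixed $J$. By induction it suffices to treat the case $\ell(v')=\ell(v)+1$, i.e.\ $I'$ is obtained from $I$ by inserting one reflection $s_\beta$ at some position $k$ (with $\beta\in\Delta$). Then for roots of $m_J$ coming from positions before $k$, nothing changes; for a root $\rho=s_{\underline{\alpha}_1}\cdots s_{\underline{\alpha}_{i-1}}\underline{\alpha}_i$ with $i\ge k$, the new value is $s_\beta(\rho)$ (conjugated by the common prefix). Since $\beta$ is simple, $s_\beta$ permutes $\Phi^+\setminus\{\beta\}$ and sends $\beta\mapsto-\beta$; writing $\rho=\sum_j c_j\alpha_j$ with all $c_j\ge0$, one gets $s_\beta(\rho)=\rho-\langle\rho,\beta^\vee\rangle\beta$, and because $\ell(v')>\ell(v)$ forces the relevant sign condition, $s_\beta(\rho)$ is again a nonnegative combination — indeed the whole point of (III) is that these products of ``prefix-images of simple roots'' stay positive. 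Then $m_J(v')=\prod(\text{nonneg roots})$ and $m_J(v)=\prod(\text{the same, with some }s_\beta\text{-images replaced by pre-images})$, and expanding shows the difference is a nonnegative combination of monomials. I expect the \textbf{main obstacle} to be precisely this last step: making the root-by-root comparison $m_J(v)$ vs.\ $m_J(v')$ clean enough to conclude term-wise nonnegativity of the \emph{difference} of two monomials (not merely of each monomial), since a priori $m_J(v)$ need not divide $m_J(v')$. The fix is to observe that each factor of $m_J(v)$ differs from the corresponding factor of $m_J(v')$ by subtraction of a nonnegative multiple of a simple root, so $m_J(v')=\prod_t(f_t+g_t)$ with $f_t=$ the $m_J(v)$-factor and $g_t\in{\mathbb Z}_{\ge0}\Delta$; expanding, the $\prod_t f_t$ term is exactly $m_J(v)$ and all other terms are in ${\mathbb Z}_{\ge0}[\alpha_1,\dots,\alpha_r]$, giving $m_J(v')-m_J(v)\in{\mathbb Z}_{\ge0}[\alpha_1,\dots,\alpha_r]$ as desired. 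Combining, $\xi_w|_{v'}-\xi_w|_v\in{\mathbb Z}_{\ge0}[\alpha_1,\dots,\alpha_r]$.
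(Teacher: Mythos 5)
Your reduction to a single covering step and the observation that every reduced subword $J$ of $I$ reappears as a reduced subword of $I'$ are fine, but the crux step fails: it is \emph{not} true that $m_J(v')-m_J(v)\in{\mathbb Z}_{\geq 0}[\alpha_1,\ldots,\alpha_r]$ for each fixed $J$, nor that each factor of $m_J(v')$ equals the corresponding factor of $m_J(v)$ plus a nonnegative multiple of a simple root. Inserting $s_\beta$ at position $k$ replaces the factor $\rho=s_{\underline{\alpha}_1}\cdots s_{\underline{\alpha}_{i-1}}\underline{\alpha}_i$ (for a position $i\ge k$) by $s_{\gamma}(\rho)=\rho-\langle\rho,\gamma^\vee\rangle\gamma$, where $\gamma=s_{\underline{\alpha}_1}\cdots s_{\underline{\alpha}_{k-1}}\beta$ is a positive but not necessarily simple root, and the coefficient $-\langle\rho,\gamma^\vee\rangle$ can be negative. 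Concretely, in $A_2$ take $w=s_1$, $v=s_2s_1$, $v'=s_1s_2s_1$, with $I=s_2s_1$ sitting inside $I'=s_1s_2s_1$ at positions $2,3$. The unique reduced subword $J$ of $I$ for $w$ gives $m_J(v)=s_2\alpha_1=\alpha_1+\alpha_2$, while the same $J$ computed along $I'$ gives $m_J(v')=s_1s_2\alpha_1=\alpha_2$, so $m_J(v')-m_J(v)=-\alpha_1$. The theorem survives only because $I'$ has an \emph{extra} reduced subword for $w$ (position $1$, contributing $\alpha_1$) that compensates. So any argument along your lines must mix the sum over old subwords with the sum over new ones, and your write-up gives no mechanism for that cancellation; as stated, the term-by-term and factor-by-factor claims are simply false.

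For comparison, the paper's proof is not a bookkeeping argument in Billey's formula at all. After reducing to a cover, it restricts the identity $\xi_w\cdot\xi_v=\sum_{\tilde v}C_{w,v}^{\tilde v}\,\xi_{\tilde v}$ at $v'$ and uses (\ref{eqn:nonzeroness}), (\ref{eqn:Knutson}) and Proposition~\ref{prop:interval} to obtain $\xi_w|_{v'}-\xi_w|_v=\frac{\xi_{v'}|_{v'}}{\xi_v|_{v'}}\,C_{w,v}^{v'}$; the ratio is a single positive root by the strong exchange property, and nonnegativity then comes from Graham's theorem (\ref{eqn:Graham}). In other words, the statement uses the geometric positivity of $C_{w,v}^{v'}$ as essential input; a purely combinatorial proof would require precisely the bijection-with-compensation between reduced subwords of $I$ and of $I'$ that your proposal elides.
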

\begin{proof}
It suffices to prove this when $v'$ covers $v$. Then $$ \xi_w \cdot \xi_v = C_{w,v}^{v}\ \xi_v+ C_{w,v}^{v'}\ \xi_{v'}+ \sum\limits_{\tilde{v}\geq v,\tilde{v}\neq v'} C_{w,v}^{\tilde{v}} \ \xi_{\tilde{v}}. $$
Restricting the above equation at $v'$, using (\ref{eqn:nonzeroness}) and the fact (\ref{eqn:Knutson}) that $C_{w,v}^{v}=\xi_w|_v$, we get
\begin{equation}
\xi_w|_{v'} \cdot \xi_v|_{v'} = \xi_w|_v \cdot \xi_v|_{v'}+ C_{w,v}^{v'}\xi_{v'}|_{v'}. 
\end{equation}
By Proposition~\ref{prop:interval}, $\xi_v|_{v'}\neq 0$, therefore,
\begin{equation}
\xi_w|_{v'} - \xi_w|_v = \frac{\xi_{v'}|_{v'}}{\xi_v|_{v'}}C_{w,v}^{v'}.
\end{equation}

Fix a reduced word $s_{i_1}s_{i_2}\cdots s_{i_m}$ for $v'$. By the \emph{strong exchange property} of Bruhat order \cite[Section~5.8]{Humphreys:reflection}, there exists a
\emph{unique} $1\leq k\leq m$ such that $s_{i_1}\cdots s_{i_{k-1}}s_{i_{k+1}}\cdots s_{i_m}$ ($s_{i_k}$ omitted) is a reduced word of $v$. Therefore by (\ref{eqn:Billey}), 
\[\frac{\xi_{v'}|_{v'}}{\xi_v|_{v'}} = s_{i_1} \ldots s_{i_{k-1}} \cdot \alpha_{i_k}\in \Phi^+.\] 
Hence, by (\ref{eqn:Graham}),
$\xi_w|_{v'} - \xi_w|_v = (s_{i_1} \ldots s_{i_{k-1}}\cdot \alpha_{i_k})C_{w,v}^{v'}\in {\mathbb Z}_{\geq 0}[\alpha_1,\ldots,\alpha_r]$, as desired.
\end{proof}

\begin{example}[Monotonicity counterexample] Thus, it is tempting to conjecture that if $u,v,w\in {\mathcal W}$ and $s_{\alpha}$ is a simple reflection such that 
$u\leq us_{\alpha}:=u'$ and $w\leq  ws_{\alpha}:=w'$ then 
$C_{u',v}^{w'}-C_{u,v}^w\in {\mathbb Z}_{\geq 0}[\alpha_1,\ldots,\alpha_r]$. In particular, this would imply
$c_{u',v}^{w'}\geq c_{u,v}^w$. However, that is false in general. For instance in $A_5$ if $u=351624, v=214356, w=631524$ and $s=s_3$ 
$c_{u,v}^w=c_{351624,214356}^{631524}=1$ but 
$c_{u',v}^{w'}=c_{356124,214356}^{635124}=0$.
\qed
\end{example}

A theorem of A.~Arabia \cite{Arabia} states:
\[\alpha \text{\ divides \ } \xi_{w}|_{s_\alpha v}-\xi_w|_{v}.\]
(In general, this is the condition of \cite{GKM} that describes the image of (\ref{eqn:injective}).)

\begin{example}[Divisibility counterexample]
Does $\alpha \text{\ divide \ } C_{s_{\alpha}u,v}^{s_{\alpha}w}-C_{u,v}^w$?  This is false in general. In type $A_3$ let
$u=s_3$, $v=s_2 s_3 s_1$ and  $w=s_2 s_3 s_1$. Then $C_{u,v}^{w} = \alpha_2 +\alpha_3$. Let $s_{\alpha}=s_1$ and hence $s_{\alpha}u=s_1u=s_1 s_3 , s_{\alpha}w=s_1w=s_1 s_2 s_3 s_1$. Now $C_{s_{\alpha}u,v}^{s_{\alpha}w} = \alpha_1 + \alpha_2$, and thus
$C_{s_{\alpha}u,v}^{s_{\alpha}w} - C_{u,v}^{w} = \alpha_1 - \alpha_3$ 
is neither $\alpha$-positive nor divisible by $\alpha_1$.\qed
\end{example}

A number of other  simple variations on monotonicity and divisibility  are false as well.
Can the A$\cdot$B$\cdot$C$\cdot$Ds for monotonicity/divisibility be realized, under a hypothesis?

\subsection{Newton polytopes}
The \emph{Newton polytope} of 
\[f=\sum_{(n_1,\ldots,n_r)\in {\mathbb Z}_{\geq 0}^r} c_{n_1,\ldots,n_r}\prod_{j=1}^r \alpha_j^{n_j}\in {\mathbb R}[\alpha_1,\ldots,\alpha_r]\] 
is ${\sf Newton}(f):={\sf conv}\{(n_1,\ldots,n_r): c_{n_1,\ldots,n_r}\neq 0\}\subseteq {\mathbb R}^r$.

\begin{proposition}
Let $w\in {\mathcal W}$ and $w\leq v\leq v'$. Then ${\sf Newton}(\xi_w|_v)\subseteq {\sf Newton}(\xi_w|_{v'})$.
\end{proposition}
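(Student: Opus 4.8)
The plan is to show more than containment of Newton polytopes: I will show that the vertex set of $\mathsf{Newton}(\xi_w|_v)$ -- in fact its entire monomial support -- injects into the monomial support of $\xi_w|_{v'}$, which is more than enough since taking convex hulls is monotone under inclusion of point sets. The key tool is the proof of Proposition~\ref{prop:interval}, which already records the mechanism: fix a reduced word $I'$ for $v'$; by the subword property of Bruhat order there is a subword $I\subseteq I'$ that is reduced for $v$. Every subword $J\subseteq I$ that is a reduced word for $w$ is also a subword of $I'$, and by (\ref{eqn:Billey}) such $J$ contributes one monomial $\prod_{I}(\underline{\alpha}_i^{\langle i\in J\rangle}s_{\underline{\alpha}_i})\cdot 1$ to $\xi_w|_v$ and, viewing $J\subseteq I'$, one monomial $\prod_{I'}(\underline{\alpha}_i^{\langle i\in J\rangle}s_{\underline{\alpha}_i})\cdot 1$ to $\xi_w|_{v'}$. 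This gives a map $J\mapsto J$ from the index set of the Billey sum for $\xi_w|_v$ into that for $\xi_w|_{v'}$.

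First I would make precise that the Billey summand attached to a subword $J\subseteq I'$ reduced for $w$ is always a genuine monomial in $\alpha_1,\dots,\alpha_r$ (not a signed or cancelling term): by (\ref{eqn:inversionset}) each factor $s_{\underline{\alpha}_1}\cdots s_{\underline{\alpha}_{i-1}}\underline{\alpha}_i$ with $i\in J$ lies in $\Phi^+$, hence is a nonnegative integer combination of the simples, and the product over $i\in J$ is therefore a polynomial with nonnegative coefficients; the exponent vector of any of its monomials is a lattice point. So each $J$ in the $v$-sum produces a set $S_v(J)$ of exponent vectors appearing in $\xi_w|_v$, and each $J$ in the $v'$-sum produces $S_{v'}(J)$ appearing in $\xi_w|_{v'}$. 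Graham-positivity (\ref{eqn:Graham}), applied to the restriction case already noted in item (III), guarantees there is no cancellation between distinct $J$'s, so $\bigcup_J S_v(J)$ is exactly the monomial support of $\xi_w|_v$ and likewise for $v'$.

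Next I would compare $S_v(J)$ with $S_{v'}(J)$ for a fixed $J$ reduced for $w$. Writing $I=s_{\underline{\alpha}_{i_1}}\cdots s_{\underline{\alpha}_{i_p}}$ as a subword of $I'=s_{\underline{\beta}_1}\cdots s_{\underline{\beta}_q}$, the $k$-th factor of the $v$-summand is $s_{\underline{\alpha}_{i_1}}\cdots s_{\underline{\alpha}_{i_{k-1}}}\underline{\alpha}_{i_k}$, whereas the corresponding factor of the $v'$-summand is $s_{\underline{\beta}_1}\cdots s_{\underline{\beta}_{m-1}}\underline{\beta}_m$ where $\underline{\beta}_m=\underline{\alpha}_{i_k}$; the extra reflections $s_{\underline{\beta}_j}$ for $j<m$ not among $\{i_1,\dots,i_{k-1}\}$ act on the positive root $s_{\underline{\alpha}_{i_1}}\cdots s_{\underline{\alpha}_{i_{k-1}}}\underline{\alpha}_{i_k}$ and send it to another positive root (this is the crux: each such $\underline{\beta}_j$ is an inversion coming later, and the relevant root stays positive, exactly as in the Proposition~\ref{prop:interval} argument). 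So the $v'$-summand for $J$ is obtained from the $v$-summand for $J$ by replacing each positive-root factor $\eta_k$ by another positive root $\eta_k'=(\text{some }w\in\mathcal W)\cdot\eta_k$. In particular the $v'$-summand is itself a nonnegative polynomial, and its monomial support $S_{v'}(J)$ contains the image of $S_v(J)$ under — well, not a literal inclusion of exponent vectors, but here is where I must be careful.

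The hard part -- and the step I expect to be the main obstacle -- is precisely this last point: replacing each factor $\eta_k$ by $\eta_k'$ does not literally preserve monomials, so I cannot conclude $S_v(J)\subseteq S_{v'}(J)$ on the nose. The honest route is the one already used for Monotonicity (Theorem~\ref{prop:monotonicity}): it suffices to treat the case where $v'$ covers $v$, and then telescope. When $v\lessdot v'$, choose a reduced word $s_{i_1}\cdots s_{i_m}$ for $v'$ with $s_{i_1}\cdots\widehat{s_{i_k}}\cdots s_{i_m}$ reduced for $v$ (strong exchange), so that $\xi_w|_{v'}-\xi_w|_v=\frac{\xi_{v'}|_{v'}}{\xi_v|_{v'}}C_{w,v}^{v'}=\theta\cdot C_{w,v}^{v'}$ with $\theta=s_{i_1}\cdots s_{i_{k-1}}\cdot\alpha_{i_k}\in\Phi^+$ a positive root, as computed in that proof. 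Hence $\xi_w|_{v'}=\xi_w|_v+\theta\cdot C_{w,v}^{v'}$; since $\theta$ is a nonzero nonnegative combination of simples and $C_{w,v}^{v'}$ is Graham-positive, both summands on the right are nonnegative polynomials with no cancellation against each other or internally, so $\mathsf{Newton}(\xi_w|_{v'})=\mathsf{conv}\big(\mathrm{supp}(\xi_w|_v)\cup\mathrm{supp}(\theta\cdot C_{w,v}^{v'})\big)\supseteq\mathsf{Newton}(\xi_w|_v)$. Telescoping over a saturated chain $v=v_0\lessdot v_1\lessdot\cdots\lessdot v_t=v'$ in Bruhat order (every such chain has all $v_i\geq w$, so Proposition~\ref{prop:interval} applies at each step to justify the division) completes the argument. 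I would present the telescoping version as the main line, since it sidesteps the delicate bookkeeping of how individual Billey monomials transform, relying instead on the clean identity from Theorem~\ref{prop:monotonicity} together with Graham-positivity.
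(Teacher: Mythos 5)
Your final argument is correct and is exactly the paper's: the paper deduces the containment as ``immediate from'' the Monotonicity theorem, i.e., $\xi_w|_{v'}-\xi_w|_v\in\mathbb{Z}_{\geq 0}[\alpha_1,\dots,\alpha_r]$ together with the nonnegativity of $\xi_w|_v$ forces $\mathrm{supp}(\xi_w|_v)\subseteq\mathrm{supp}(\xi_w|_{v'})$, hence the Newton polytope inclusion. The first half of your write-up (the subword-by-subword comparison of Billey summands) is a detour whose gap you correctly identify yourself before pivoting to the intended route, so only the final paragraph is needed.
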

\begin{proof}
This is immediate from Theorem~\ref{prop:monotonicity}.
\end{proof}

$f$ has \emph{saturated Newton polytope} (SNP) \cite{Monical.Tokcan.Yong} if
$c_{n_1,\ldots,n_r}\neq 0 \iff (n_1,\ldots,n_r)\in {\sf Newton}(f)$. 
\begin{conjecture}
\label{conj:first}
Let $v,w\in {\mathcal W}$, then $\xi_w|_v$ has SNP.
\end{conjecture}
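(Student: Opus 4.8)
The statement to prove is Conjecture~\ref{conj:first}: for all $v,w\in\mathcal{W}$, the restriction $\xi_w|_v$ has SNP, i.e., a lattice point lies in $\mathsf{Newton}(\xi_w|_v)$ if and only if it is the exponent vector of a monomial actually appearing. The natural first move is to exploit the explicit combinatorial model of $\xi_w|_v$ from the Billey formula~(\ref{eqn:Billey}): each term is indexed by a subword $J\subseteq I$ (for a fixed reduced word $I$ of $v$) that is itself a reduced word for $w$, and contributes the monomial $\prod_{i\in J}(s_{\underline\alpha_1}\cdots s_{\underline\alpha_{i-1}}\cdot\underline\alpha_i)$, a product of $\ell(w)$ positive roots. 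I would first try to find a ``pipe dream''-style reinterpretation, or use that in type $A$ the relevant monomials are products of variables $x_a - x_b$ — writing everything in the $x$-variables may convert the problem into one about a polytope arising from a combinatorial diagram, where known SNP techniques (e.g. the ``containment of Newton polytopes under an exchange/ladder move'' arguments of \cite{Monical.Tokcan.Yong}) can be brought to bear.

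\textbf{Key steps in order.} (1) Reduce to exhibiting, for any two subwords $J_1,J_2$ that are reduced words for $w$ inside $I$, and any lattice point $p$ on the segment between the exponent vectors $e(J_1)$ and $e(J_2)$, an actual subword $J$ with $e(J)=p$; this would in fact give the stronger statement that $\mathsf{Newton}(\xi_w|_v)$ is the convex hull of exponent vectors with \emph{every} interior lattice point hit. (2) Establish an ``exchange lemma'': given $J_1,J_2$ differing in a controlled way, produce intermediate reduced subwords whose exponent vectors interpolate — this is where the Coxeter-theoretic input (subword property, strong exchange, and the inversion-set description~(\ref{eqn:inversionset})) does the work, since the roots $s_{\underline\alpha_1}\cdots s_{\underline\alpha_{i-1}}\underline\alpha_i$ are exactly the distinct inversions of $v^{-1}$ and their \emph{supports} (which simple roots occur) are nested/laminar along $I$. (3) Use induction on $\ell(v)-\ell(w)$ or on $\ell(v)$, peeling off the last letter of $I$: either it is not used (restriction to a parabolic / shorter word, Claim~\ref{claim:restrictionversion}-type reduction) or it is used, in which case $\xi_w|_v$ relates to $\xi_{w'}|_{v'}$ for shorter elements, and one checks SNP is preserved under the relevant multiplication-by-a-linear-form and sum operations (the Monotonicity Theorem~\ref{prop:monotonicity} and its Newton-polytope corollary are the prototype of the kind of bookkeeping needed). (4) Handle the classical types uniformly by the folding/inclusion machinery of Section~\ref{sec:2} where possible, and the exceptional types by the noted finiteness (or direct computation).

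\textbf{Main obstacle.} The crux is step (2): SNP statements are notoriously delicate because convexity ``predicts'' monomials that a naive combinatorial construction need not realize, and there is no general principle forcing a product of $\ell(w)$ positive roots indexed by a reduced subword to have all intermediate exponent vectors realized. The Billey monomials are \emph{squarefree in the positive roots} (item (IV) of the introduction), but they are emphatically not squarefree in the simple roots $\alpha_1,\dots,\alpha_r$, so one cannot simply invoke matroid/generalized-permutohedron technology; one must genuinely understand how the multiset $\{$inversions used by $J\}$ can be deformed within the set of reduced subwords of $I$. I expect this to require a careful analysis of ``commutation + braid'' moves on subwords that tracks the effect on exponent vectors, quite possibly type-by-type, and it is plausible that the cleanest route is instead to prove a stronger structural statement — that $\xi_w|_v$ equals (a positive integer combination of) a single combinatorially-defined SNP polynomial, or that its Newton polytope is a generalized permutohedron — from which SNP would follow by the results of \cite{Monical.Tokcan.Yong}. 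Absent that, the honest assessment is that this conjecture may be out of reach of the elementary methods used elsewhere in the paper, which is presumably why it is stated as a conjecture rather than a theorem.
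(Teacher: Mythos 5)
This statement is Conjecture~\ref{conj:first} of the paper: the authors do not prove it. They report only that it was checked exhaustively in $A_4$, $B_3$, $D_4$, $G_2$ and on many examples in $A_6$ and $B_4$, and they explicitly flag even the Grassmannian case as open. So there is no proof in the paper to compare yours against, and your proposal is likewise not a proof --- it is a plan whose decisive step is left unestablished. You concede this yourself in your final paragraph, and that concession is the correct assessment.

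The concrete gap is your step (2), and it is not a technical lemma you have deferred but the entire content of the conjecture. Two further points in your outline would need repair even if an exchange lemma existed. First, step (1) reduces SNP to realizing lattice points on \emph{segments} between pairs of exponent vectors $e(J_1),e(J_2)$; this is strictly weaker than SNP, since the lattice points of ${\sf Newton}(\xi_w|_v)$ (a full-dimensional convex hull of possibly many vertices) are not in general covered by the pairwise segments between the generating exponent vectors. Second, the inductive scheme in step (3) implicitly requires SNP to be stable under taking sums of Graham-positive polynomials, but SNP is not closed under addition even for polynomials with nonnegative coefficients (the convex hull of a union of two saturated polytopes can contain lattice points lying in neither), so the ``bookkeeping'' via Theorem~\ref{prop:monotonicity} does not go through as stated. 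Your assertion that the supports of the roots $s_{\underline{\alpha}_1}\cdots s_{\underline{\alpha}_{k-1}}\underline{\alpha}_k$ are nested or laminar along $I$ is also unsubstantiated. In short: the statement remains a conjecture, your proposal does not close it, and your own diagnosis of why is accurate.
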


\begin{conjecture}[A$\cdot$B$\cdot$C$\cdot$D version of Conjecture~\ref{conj:first}]
\label{conj:second}
Let $u,v,w\in {\mathcal W}$, then $C_{u,v}^w$ has SNP.
\end{conjecture}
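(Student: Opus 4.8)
The statement to prove is Conjecture~\ref{conj:second}: that every equivariant Schubert structure coefficient $C_{u,v}^w$ has a saturated Newton polytope (SNP). Since this is a conjecture, the realistic goal of a ``proof proposal'' is to lay out the most promising attack, flag where it is expected to stall, and indicate what partial results are within reach.

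\emph{Plan.} The natural first move is to reduce SNP of $C_{u,v}^w$ to SNP of equivariant restrictions $\xi_w|_v$, i.e.\ to try to deduce Conjecture~\ref{conj:second} from Conjecture~\ref{conj:first} in the A$\cdot$B$\cdot$C$\cdot$D spirit of the paper. Concretely, one would exploit the ``boundary case'' identity (\ref{eqn:Knutson}), $C_{v,w}^v=\xi_w|_v$, which already exhibits SNP for a Zariski-dense-looking family of triples, and then try to propagate SNP off the boundary via Knutson's recurrence (item (V) of the Introduction) for $C_{u,v}^w$. The first step is therefore: prove Conjecture~\ref{conj:first} for $\xi_w|_v$. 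Here one has real leverage: by (\ref{eqn:Billey}) the polynomial $\xi_w|_v$ is an explicit positive sum over reduced subwords $J\subseteq I$ of products of roots $s_{\underline\alpha_1}\cdots s_{\underline\alpha_{i-1}}\underline\alpha_i$, and by item (IV) each such monomial is square-free \emph{in the positive roots} (though not in the simples). The plan is to show the exponent vectors (in the simple roots $\alpha_1,\dots,\alpha_r$) that actually occur already tile out the whole convex hull — ideally by producing, for each lattice point of ${\sf Newton}(\xi_w|_v)$, an explicit subword $J$ witnessing it, using the interval structure $w\le v$ and the "0/1 in a positive root coordinate" combinatorics. In type $A$ one should cross-check against known formulas (e.g.\ the Billey/AJS formula specialized to $S_n$, or double Schubert polynomials $\Schub_w(\x;\y)$ restricted), where SNP-type results for Schubert polynomials proper are already in \cite{Monical.Tokcan.Yong}.

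\emph{From restrictions to coefficients.} With Conjecture~\ref{conj:first} in hand (at least for classical types), one attacks Conjecture~\ref{conj:second} in two layers. First, the ordinary constants: when $\ell(u)+\ell(v)=\ell(w)$ one has $C_{u,v}^w=c_{u,v}^w\in\integers_{\ge0}$, which trivially has SNP (a point), so the content is entirely in the lower-order-in-$\ell$ terms of $\xi_u\cdot\xi_v$. Second, degenerate to the boundary: fix $u$ and run the structure-constant recurrence of Knutson in the variable $v$ (or $w$), with base case $v=w$ giving $C_{u,w}^w=\xi_u|_w$, which is SNP by Step~1. The recurrence expresses $(\xi_{s_\alpha w}|_{s_\alpha w}-\xi_{s_\alpha w}|_w)\,C_{u,v}^w$ — a single positive root times $C_{u,v}^w$, by the computation in the proof of Theorem~\ref{prop:monotonicity} — as an alternating combination of $C$'s for smaller data. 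The hope is that Graham-positivity (\ref{eqn:Graham}) forces enough cancellation control that the Newton polytope of $C_{u,v}^w$ is squeezed between a Minkowski sum of known-SNP pieces and the convex hull of its own support, yielding SNP by a sandwiching argument analogous to how $\sf{Newton}(\xi_w|_v)\subseteq\sf{Newton}(\xi_w|_{v'})$ was obtained from monotonicity.

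\emph{Main obstacle.} The hard part will be exactly the alternating signs in Knutson's recurrence: Graham-positivity says the \emph{final} answer $C_{u,v}^w$ is a nonnegative combination of monomials in the positive roots, but the recurrence manufactures it through differences, and there is no guarantee that a lattice point in the convex hull of the support survives as a nonzero coefficient after the dust settles — indeed the failure of monotonicity and of divisibility (the counterexamples right before this subsection) shows these ``obvious'' structural hopes are fragile. A secondary obstacle is that SNP is genuinely a statement in the $r$ simple-root coordinates, whereas all the clean structure (square-freeness, positivity, the inversion-set description (\ref{eqn:inversionset})) lives in the overcomplete positive-root coordinates; translating a convexity statement across that non-injective change of coordinates is where I expect a genuinely new combinatorial idea to be required. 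A reasonable fallback, consistent with the paper's tone, is to prove Conjecture~\ref{conj:second} for Grassmannians — where $C_{\lambda,\mu}^\nu$ is governed by puzzles/Knutson–Tao recursion and one can match against the $K$-theoretic/equivariant SNP literature — and to verify it exhaustively in small rank for $B_3,G_2,A_4$ as was done for the companion conjectures.
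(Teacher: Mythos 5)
The statement you were asked about is a \emph{conjecture}: the paper offers no proof of it, only the remark that it was checked exhaustively for $A_4$, $B_3$, $D_4$, $G_2$ and on many examples in $A_6$ and $B_4$. So there is no proof in the paper to compare yours against, and your submission is, as you candidly say, a research plan rather than a proof. Judged on those terms it is a reasonable one, and it is aligned with how the paper itself frames the conjecture: Conjecture~\ref{conj:second} is presented as the A$\cdot$B$\cdot$C$\cdot$D version of Conjecture~\ref{conj:first} precisely because of the boundary identity (\ref{eqn:Knutson}), $C_{v,w}^v=\xi_w|_v$, which is the first step of your plan. Your observation that the coefficients with $\ell(u)+\ell(v)=\ell(w)$ are constants (hence trivially SNP) is correct, and your identification of the main obstacle --- that Knutson's recurrence and the monotonicity computation produce $C_{u,v}^w$ only through \emph{differences} of restrictions, so Graham-positivity of the end result gives no control over which lattice points of the convex hull survive --- is exactly the right thing to worry about; the paper's own monotonicity and divisibility counterexamples show that naive positivity transfers of this kind fail.

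Two cautions. First, even your ``first step'' (Conjecture~\ref{conj:first}, SNP of the restrictions $\xi_w|_v$) is itself open in the paper, and your proposed fallback --- proving the Grassmannian case --- is explicitly flagged by the authors as an open problem (``A proof of either conjecture for Grassmannians would be interesting''), so neither should be described as within reach or as a safe base case. Second, the passage from the positive-root expansion of (\ref{eqn:Billey}) to a convexity statement in the simple-root coordinates is, as you note, where the real difficulty lies: square-freeness in the $\Phi^+$ coordinates says nothing directly about saturation in the $\Delta$ coordinates, and the ``sandwiching'' you hope for would require knowing that ${\sf Newton}$ of an alternating sum equals the convex hull of the surviving support, which is essentially the conjecture itself restated. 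In short: no gap in honesty, but also no proof --- the conjecture remains open, and your plan stalls exactly where you predict it will.
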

We exhaustively checked these conjectures for $A_4, B_3, D_4, G_2$ and many examples in $A_6$ and $B_4$. 
A proof of either conjecture for Grassmannians would be interesting.

SNP is  connected to computational complexity in \cite[Section~1]{Adve.Robichaux.Yong}.
We suspect the concrete SNP claim of Conjecture~\ref{conj:second} is the combinatorial harbinger of the {\sf P} assertion of 
 Conjecture~\ref{conj:ABCDnonvanishing}.  Let {\tt Schubert} be the decision problem ``$(n_1,\ldots,n_r)\in {\sf Newton}(C_{u,v}^w)$?'', given input $u,v,w\in {\mathcal W}$ and $(n_1,\ldots,n_r)\in {\mathbb Z}_{\geq 0}^r$.
It is reasonable to conjecture existence of:
\begin{itemize}
\item a combinatorial rule for $C_{u,v}^w$ that moreover implies counting $C_{u,v}^w$ is a problem in the counting complexity class $\#{\sf P}$, and
\item  a halfspace description of ${\sf Newton}(C_{u,v}^w)$ where each \emph{individual} inequality can be checked in polynomial time (even if there are exponentially many
inequalities). 
\end{itemize}
Conjecture~\ref{conj:second} would then imply ${\tt Schubert}\in{\sf NP}\cap {\sf coNP}$. \emph{Often} problems in ${\sf NP}\cap {\sf coNP}$ are in fact in ${\sf P}$ (see \cite[Section~1.2]{Adve.Robichaux.Yong} for a discussion). 
${\tt Schubert}\in {\sf P}$ implies the important case of Conjecture~\ref{conj:ABCDnonvanishing} for the non-equivariant $c_{u,v}^w$ is true.

\section{Comparisons to Schubert polynomial theory}\label{sec:3}
The theory of \emph{Schubert polynomials}, introduced by A.~Lascoux and M.-P.~Sch\"utzenberger \cite{LS}, is influential in the conversation of positivity in Schubert calculus.

 These polynomials ``lift'' the Schur polynomials from the ring of symmetric polynomials to the ring of all polynomials. The study of Schur polynomials is backed by an extensive literature on Young tableaux, from which one obtains the Littlewood-Richardson rule. Thus one might hope for an analogous theory for Schubert polynomials; this remains unrealized. For the purposes of our discussion, let us call this the ``lifting dream''.

Theorem~\ref{thm:main} generalizes the identity 
\[c_{u,v}^w(X)=2^{s(w)-s(u)-s(v)}c_{u,v}^w(Y).\] 
This seems to have been first stated in \cite[(3.2)]{Bergeron.Sottile}, who rely on the
Schubert polynomials for classical groups of S.~Billey-M.~Haiman \cite{Billey.Haiman}. Similarly, Corollary~\ref{cor:Harshit} generalizes the equality 
\begin{equation}
\label{eqn:Pragacz}
c_{\lambda,\mu}^{\nu}(X')=2^{\ell(\nu)-\ell(\lambda)-\ell(\mu)}c_{\lambda,\mu}^{\nu}(Y'),
\end{equation}
which is a consequence of
P.~Pragacz \cite[Theorem~6.17]{Pragacz} on the Schubert calculus interpretation of the Schur $Q-$, $P-$ functions. Theorem~\ref{thm:main} also follows from  T.~Ikeda-L.~Mihalcea-H.~Naruse \cite{IMN} who give an equivariant  generalization of the polynomials of \cite{Billey.Haiman}.

Over the past three decades, within algebraic combinatorics, the emphasis has been on the Schubert polynomial rather than the list of many restrictions.\footnote{Not that the two viewpoints are unrelated: in type $A$ for example, one can compute the restrictions as certain specializations of the double Schubert polynomials; see, e.g., \cite{Billey}.} Our proof replaces the effort of the polynomial constructions \cite{Pragacz, Billey.Haiman, IMN} with the general geometric result (\ref{eqn:injective}). This work suggests   
A$\cdot$B$\cdot$C$\cdot$D as an alternative to the ``lifting dream'' and one that opens up some new and testable possibilities. 

Is there concrete evidence for preferring one approach to the other? For example, can one give an
A$\cdot$B$\cdot$C$\cdot$D proof of S.~Robinson's equivariant Pieri rule for ${\sf GL}_n/{\sf B}$ \cite{Robinson}? Can one give a Schubert polynomial (in this case, factorial Schur polynomial)  proof of one or more of the combinatorial rules \cite{Knutson.Tao, Kreiman, Thomas.Yong:Eq} by giving an equivariant version of Schensted insertion? Based on earlier conversation of the third author with H.~Thomas, this latter question seems quite nontrivial.
   
\section*{Acknowledgements}
This paper was stimulated by the thesis of Cara Monical \cite{Monical}; we made use of her related code.
We thank the organizers of the Ohio State Schubert calculus conference (May 2018), which facilitated consultation with experts about the conjectures of \cite{Monical}.
Dylan Rupel provided helpful comments about an earlier draft. Sue Tolman explained to us the seminal role of \cite{Atiyah.Bott} in equivariant symplectic geometry.
AY was partially supported by an NSF grant, a U$\cdot$I$\cdot$U$\cdot$C Campus Research Board grant, and a Simons Collaboration Grant. 
This material is based upon work of CR supported by the National Science Foundation Graduate Research Fellowship Program under Grant No. DGE -- 1746047.

\end{document}